\newcommand{\q}{Q}
\newcommand{\qp}{Q'}
\newcommand{\IR}{{\mathbb{R}}}
\newcommand{\R}{{\mathbb{R}}}
\newcommand{\IC}{{\mathbb{C}}}
\newcommand{\cE}{{\mathcal{E}}}
\newcommand{\edv}{{\mathcal{E}^{(D)}_V}}
\newcommand{\env}{{\mathcal{E}^{(N)}_V}}
\newcommand{\QM}{{Q^{(M)}}}
\newcommand{\QK}{{Q^{(k)}}}
\newcommand{\capa}{{\operatorname{Cap}}}
\newcommand{\supp}{\operatorname{supp}}
\renewcommand{\phi}{\varphi}
\renewcommand{\epsilon}{\varepsilon}
\newcommand{\K}{\mathcal{K}}
\newcommand{\ca}{\operatorname{cap}}
\newcommand{\abs}[1]{\lvert#1\rvert}
\newcommand{\norm}[1]{\lVert#1\rVert}
\newtheorem{theorem}{Theorem}[section]
\newtheorem{lemma}[theorem]{Lemma}
\newtheorem{proposition}[theorem]{Proposition}
\newtheorem{corollary}[theorem]{Corollary}
\newtheorem*{theorem*}{Theorem}
\theoremstyle{definition}
\newtheorem{definition}[theorem]{Definition}
\newtheorem{remarks}[theorem]{Remark}
\newtheorem{example}[theorem]{Example}
\newcommand{\Hmm}[1]{\leavevmode{\marginpar{\tiny%
			$\hbox to 0mm{\hspace*{-0.5mm}$\leftarrow$\hss}%
			\vcenter{\vrule depth 0.1mm height 0.1mm width \the\marginparwidth}%
			\hbox to 0mm{\hss$\rightarrow$\hspace*{-0.5mm}}$\\\relax\raggedright #1}}}
\title[Intermediate Dirichlet forms]{Boundary representations of intermediate forms between a regular Dirichlet form and its active main part}
\author[M. Keller]{Matthias Keller}
\address{M.Keller, Institut für Mathematik, Universität Potsdam, Campus Golm, Haus 9, Karl-Liebknecht-Str. 24-25, 14476 Potsdam OT Golm, Germany}
\email{matthias.keller@uni-potsdam.de}
\author[D. Lenz]{Daniel Lenz}
\address{D.Lenz, Institut für Mathematik, Friedrich-Schiller-Universität Jena, 07737 Jena, Germany}
\email{daniel.lenz@uni-jena.de}
\author[M. Schmidt]{Marcel Schmidt}
\address{M. Schmidt, Mathematisches Institut, Universität Leipzig, Augustusplatz 10, 04109 Leipzig, Germany}
\email{marcel.schmidt@math.uni-leipzig.de}
\author[M. Schwarz]{Michael Schwarz}
\address{M. Schwarz, dotSource GmbH, Goethestr. 1, 07743 Jena, Germany}
\email{m.schwarz@dotSource.de}
\author[M. Wirth]{Melchior Wirth}
\address{M. Wirth,  Institute of Science and Technology Austria (ISTA), Am Campus 1, 3400 Klosterneuburg, Austria}
\email{melchior.wirth@ist.ac.at}
\begin{document}

\maketitle

\begin{abstract}
 We characterize all semigroups sandwiched between the semigroup of a Dirichlet form and the semigroup of its active main part. In case the Dirichlet form is regular, we give a more explicit description of the quadratic forms of the sandwiched semigroups in terms of pairs consisting of an open set and a measure on an abstract boundary. 
 \end{abstract}

\section*{Introduction}\label{chapter:boundary_rep_spec}

One prime example of different self-adjoint realizations of the same differential expression are the Dirichlet and Neumann Laplacian on a bounded domain, i.e. two operators that only differ by the choice of boundary conditions. More generally one may ask which self-adjoint realizations of a differential expression arise from choosing boundary conditions.

For the Laplacian, one possible answer was given by Arendt and Warma in \cite{AW03}: If $\Omega$ is a domain with Lipschitz boundary, a self-adjoint positive operator $L$ on $L^2(\Omega)$ is a Laplacian with Robin-type boundary conditions if and only if the associated semigroup $(e^{-tL})$ is sandwiched between the Dirichlet and Neumann heat semigroup in the sense that 
$$e^{t \Delta^{(D)}} f \leq e^{-tL} f \leq e^{t \Delta^{(N)}}f$$
for all $f \geq 0$ and $t > 0$. Here the Laplacians with Robin-type boundary conditions can best be described in terms of associated quadratic forms: The Dirichlet form $Q$ associated with $L$ satisfies $D(Q)   =  \{f \in H^1(\Omega) \mid f = 0 \text{ quasi everywhere  on  }  \Omega\setminus O\}$ and 
$$Q(f) = \int_\Omega |\nabla f|^2 dx + \int_{\partial \Omega} |\tilde f|^2 d\mu $$
for some open $O \subseteq \partial \Omega$ and a measure $\mu$ on $\partial \Omega$ not charging sets of capacity zero. Here $\tilde f$ denotes a quasi-continuous modification of $f$.

Note that in the original work of Arendt and Warma there was an additional condition that $L$ be local, but this was later shown to be superfluous by Akhlil \cite{AK18}.

This result has been generalized in several directions. Chill and Warma \cite{CW12} gave a similar characterization of (nonlinear) semigroups sandwiched between the semigroup generated by the $p$-Laplacian with Dirichlet boundary conditions and the $p$-Laplacian with Neumann boundary conditions. Later this characterization was extended to semigroups associated with local nonlinear Dirichlet forms by Claus \cite{Cla21}. In \cite{ACD21} Arora, Chill and Djida do not study sandwiched semigroups but give a characterization of all semigroups dominating a semigroup induced by a regular form.   

A related problem was studied by Posilicano in \cite{Pos15}. For a bounded domain with smooth boundary he characterizes all self-adjoint realizations of the Laplacian that generate Markovian semigroups via certain Dirichlet forms on the boundary of the domain. Applying his findings to realizations with sandwiched semigroups one obtains the result of Arendt and Warma  under higher regularity assumptions on the boundary of the domain.  For discrete Laplacians associated with infinite graphs similar characterizations of Markovian realizations were obtained by the first four authors in \cite{KLSS20}.  In this case, the employed boundary is the Royden boundary of the graph, which is defined using Gelfand theory. 

In this article we treat the question of sandwiched semigroups in the abstract context of Dirichlet forms. We start with a regular Dirichlet form without killing whose generator we take as an abstract analogue of the Dirichlet Laplacian. In this setting there is a natural analogue of the Neumann Laplacian, namely the generator of the active main part of our given regular Dirichlet form, which was introduced in \cite{S17,S18}. Our framework includes not only the Laplacian on domains treated by Arendt and Warma, but also various Laplace-like operators like fractional Laplacians, Laplacians on manifolds and metric measure spaces or Laplacians on weighted graphs and quantum graphs.

We first give an abstract characterization of the generators of semigroups that are sandwiched between the semigroup associated with a regular Dirichlet form and the semigroup associated with its active main part in terms of order properties.

To connect these sandwiched semigroups to boundary conditions, the first problem is to find a good notion of boundary in this setting. As all the quadratic forms involved are defined on the $L^2$-space of some abstract topological measure space, there is no immediate geometric notion of boundary available. As in \cite{KLSS20} and \cite{ACD21} we introduce a notion of boundary that is defined using Gelfand theory and depends on the given regular Dirichlet form.  

With this notion of boundary, we can prove an abstract version of the main result of Arendt and Warma (Theorem~\ref{theorem:main}):

\begin{theorem*}
Let $Q$ be a regular Dirichlet form on $L^2(X,m)$ without killing and $Q^{(M)}$ its active main part. For a Dirichlet form $Q'$ on $L^2(X,m)$, the following assertions are equivalent:
\begin{enumerate}[(i)]
\item There exists an open subset $O$ of $X\cup\partial X$ and a measure $\mu$ on $O\cap \partial X$ that does not charge polar sets such that $Q'$ is the closure of the quadratic form $Q^c_{O,\mu}$ given by $D(Q^c_{O,\mu})=D(Q)\cap C_c(O)$ and
\begin{equation*}
Q^c_{O,\mu}(f)=Q(f)+\int_{O\cap \partial X} f^2\,d\mu.
\end{equation*}
\item The semigroup associated with $Q'$ is sandwiched between the semigroup associated with $Q$ and the semigroup associated with $Q^{(M)}$, and $D(Q')\cap C_c(X\cup \partial X)$ is a form core for $Q'$.
\end{enumerate}
\end{theorem*}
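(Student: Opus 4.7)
The plan is to prove each implication separately, exploiting the explicit structure in (i) together with the abstract order-theoretic characterization of sandwiched semigroups established earlier in the paper.

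For the direction $(i) \Rightarrow (ii)$, I would first verify that $Q^c_{O,\mu}$ is well-defined, closable, and Markovian, so that its closure $Q'$ is a genuine Dirichlet form. Well-definedness is immediate because elements of $D(Q)\cap C_c(O)$ are continuous on $X\cup\partial X$ with compact support in $O$, making pointwise values on $O\cap\partial X$ unambiguous; that $\mu$ does not charge polar sets allows the boundary integral to pass to quasi-continuous modifications. Closability follows from the closedness of $Q$: a $Q^c$-Cauchy sequence null in $L^2$ is automatically $Q$-Cauchy, and a Fatou argument on a quasi-uniformly convergent subsequence of quasi-continuous modifications handles the boundary term. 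The Markov property passes to the closure because both summands are compatible with normal contractions. The semigroup sandwich is then read off by applying the abstract order characterization to the explicit form $Q^c_{O,\mu}$: the restriction to $C_c(O)$ together with the non-negative boundary integral pins $Q'$ in the correct position between $Q$ and $Q^{(M)}$. The core condition in (ii) is automatic, since $D(Q)\cap C_c(O)$ is contained in $D(Q')\cap C_c(X\cup\partial X)$ and is $Q'$-dense by construction.

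For the harder direction $(ii) \Rightarrow (i)$, the strategy is to extract $(O,\mu)$ directly from the given core. Set $O$ to be the union in $X\cup\partial X$ of the supports of elements of the core $D(Q')\cap C_c(X\cup\partial X)$; this is open by construction. Using the regularity of $Q$, one shows that $D(Q)\cap C_c(O)$ is itself a form core for $Q'$, so all computations can be reduced to this smaller set. The central task is to construct $\mu$. Introduce the non-negative quadratic functional $R(f) := Q'(f) - Q(f)$ on $D(Q)\cap C_c(O)$, non-negative by the sandwich via the abstract criterion. The key observation is that $R$ is a \emph{pure boundary} form: it depends only on the restriction of the quasi-continuous modification of $f$ to $O\cap\partial X$. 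This is forced by combining the sandwich with $Q^{(M)}$ with the fact that $Q$ and $Q^{(M)}$ coincide on the part of $D(Q)$ whose quasi-continuous modifications vanish on $O\cap\partial X$. A Riesz-type representation for non-negative Markovian quadratic functionals on $C_c(O\cap\partial X)$ then yields a Radon measure $\mu$ with $R(f) = \int_{O\cap\partial X} f^2\,d\mu$. That $\mu$ does not charge polar sets follows by testing $R$ against quasi-continuous cutoffs of arbitrarily small $Q'$-norm around a prescribed polar set and using $R \leq Q'$.

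The main obstacle is the localization step in $(ii)\Rightarrow(i)$, namely proving that $R$ is truly a pure boundary form rather than spreading into $X$. This requires a sufficiently rich family of cutoffs in $D(Q)\cap C_c(O\cap X)$ that approximate any given core element away from $\partial X$, a task that draws essentially on the regularity of $Q$ and on the Gelfand-type construction of $\partial X$ developed earlier in the paper. Once $R$ has been properly localized on the boundary, the representation as an integral against a Radon measure and the verification of the polar-set property are relatively standard potential-theoretic arguments.
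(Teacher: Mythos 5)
Your direction $(i)\Rightarrow(ii)$ is roughly in line with the paper (closability and domination are read off Propositions~\ref{propostion:characterization admissibility} and \ref{proposition:characterization domination} once one works on $\mathcal K$ and uses $Q^{(M)}$ instead of $Q$), so I focus on $(ii)\Rightarrow(i)$, where there is a genuine gap.

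The functional you set out to represent, $R(f):=Q'(f)-Q(f)$ on $D(Q)\cap C_c(O)$, is \emph{identically zero}. By Theorem~\ref{theorem:abstract characterization}(ii)(c) the sandwich $Q\preceq Q'\preceq Q^{(M)}$ forces $Q'$ to be an \emph{extension} of $Q$; hence $Q'=Q$ on all of $D(Q)$, and $R\equiv 0$ on $D(Q)\cap C_c(O)$. This is in fact consistent: elements of $D(Q)\cap C_c(O)$ have quasi-continuous modifications vanishing q.e.\ on $\partial X$ (Proposition~\ref{proposition:identities}), so any measure $\mu$ on $\partial X$ that does not charge polar sets would give $\int f^2\,d\mu=0$ for these $f$. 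Your construction therefore never produces $\mu$, and the ``key observation'' that $R$ is a pure boundary form is vacuously true for the wrong reason. (As an aside, the positivity claim is also the wrong way around: $Q\preceq Q'$ yields $Q'(f)\leq Q(f)$ for nonnegative $f\in D(Q)$, so before noticing $R\equiv0$ you would have $R\leq 0$, not $R\geq 0$.)

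The measure $\mu$ lives on boundary values, and to see it one must test on functions that are genuinely nonzero on $\partial X$, i.e.\ on the \emph{larger} domain. The paper works with $q:=Q'-Q^{(M)}$ on $D(Q^{(M)})\cap C_c(O)$: domination $Q'\preceq Q^{(M)}$ together with the algebraic-ideal/cutoff argument shows $D(Q')\cap C_c(O)=D(Q^{(M)})\cap C_c(O)$, positivity of $q$ on nonnegative elements comes from $Q'\preceq Q^{(M)}$, locality and vanishing of $q$ on $D(Q)\cap C_c(X)$ come from Theorem~\ref{theorem:abstract characterization}(ii)(b)--(c), and then Corollary~\ref{coro:representation theorem} produces a Radon measure on $O$ that must be supported on $O\cap\partial X$ because $D(Q)\cap C_c(X)$ is uniformly dense in $C_c(X)$. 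Relatedly, $D(Q)\cap C_c(O)$ is not a core for $Q'$ but for $Q$ itself (it omits precisely the boundary degrees of freedom); the correct core to work with is $D(Q')\cap C_c(O)$. Finally, your construction of $O$ as a union of supports should be replaced by the zero-set characterization of closed ideals in $C_0(\mathcal K)$ applied to the uniform closure of the core, so that $D(Q')\cap C_c(\mathcal K)$ really is an ideal structure one can exploit; the ``union of open sets $\{f\neq 0\}$'' gives the same $O$, but ``union of supports'' as literally stated need not be open.
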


In other words, the Dirichlet forms sandwiched between $Q$ and $Q^{(M)}$ (in the sense of domination of semigroups) are parametrized by measures on open subsets of an abstract boundary. 

In spirit our main result for regular Dirichlet forms is similar to the one of \cite{ACD21}, which treats an even more general setting without assuming the Markov property. The main differences are that for the first abstract part we need not assume any regularity of the forms and when we assume regularity, our results are more explicit.

The article is organized as follows: In Section~\ref{sec:basics} we introduce the notation used throughout this article and recall some basic facts about Dirichlet forms and domination of semigroups. In Section~\ref{sec:sandwiched_forms_abstract} we review the active main part of a regular Dirichlet form and give an abstract characterization of the Dirichlet forms sandwiched between the given regular Dirichlet form and its active main part (Theorem~\ref{theorem:abstract characterization}). In Section \ref{section:domination regular} we study some properties of the forms $Q^c_{O,\mu}$ in the main theorem stated above, in particular their closability. In Section~\ref{sec:boundary_reps} we introduce our notion of boundary and show how sandwiched Dirichlet forms can be represented by measures on open subsets of the boundary (Theorem \ref{theorem:main}). Finally, in the appendix we collect some facts about bilinear forms on spaces of compactly supported continuous functions.

Parts of this paper are based on the PhD thesis of the fourth-named author \cite{Sch20}. 

\subsection*{Acknowledgments} The first three authors acknowledge financial support of the DFG  within the priority programme  Geometry at Infinity.   M.W. acknowledges financial support by the German Academic Scholarship Foundation, by the Austrian Science Fund (FWF) through grant number F65 and the Esprit Programme [ESP 156], and by the European Research Council (ERC) under the European Union's Horizon 2020 research and innovation programme (grant agreement No 716117). For the purpose of Open Access, the authors have applied a CC BY public copyright licence to any Author Accepted Manuscript (AAM) version arising from this submission.

\section{Dirichlet forms and domination of associated semigroups}\label{sec:basics}

In this section we introduce notation and review some basic definitions and results about Dirichlet forms and domination of the associated semigroups. Unless stated otherwise, all functions are real-valued. Throughout $(X,\mathcal{A},m)$ is a $\sigma$-finite measure space and $Q$ denotes a nonnegative quadratic form with domain $ D(Q) \subseteq L^2(X,m)$.  We tacitly identify $Q$ and the bilinear form it induces by polarization. In particular, we have the convention $Q(f) = Q(f,f)$ for $f \in D(Q)$. The {\em form norm} $\norm{\cdot}_Q$ is the norm on $D(Q)$ defined by
$$\norm{f}^2_Q = Q(f) + \norm{f}^2,$$ 
where $\norm{\cdot}$ is the norm on $L^2(X,m)$. If $Q'$ is another quadratic form we write $Q \sqsubseteq Q'$ if $D(Q) \subseteq D(Q')$ and $Q(f)\geq Q'(f)$ for all $f \in D(Q)$. The induced order relation $\sqsubseteq$ on all quadratic forms is called the {\em natural order}.
%

We say that a quadratic form is {\em positive} if $Q(f,g) \geq 0$  for all nonnegative $f,g \in D(Q)$.  It is called {\em local} if $fg = 0$ implies $Q(f,g) = 0$ for all $f,g \in D(Q)$. Moreover, $Q$ is called {\em monotone} if $|f| \leq |g|$ implies $Q(f) \leq Q(g)$ whenever $f,g \in D(Q)$. In this case, $Q(f)$ only depends on the absolute value of $f$ and not on its sign. We discuss these properties for forms whose domains are continuous functions in Appendix~\ref{section:bilinear_and_measure}.

\subsection{(Regular) Dirichlet forms}

A densely defined closed quadratic form $Q$ on $L^2(X,m)$ is called {\em Dirichlet form} if $f \in D(Q)$ implies $f_+ \wedge 1 \in D(Q)$ and $Q(f_+ \wedge 1) \leq Q(f)$. The {\em second Beurling-Deny criterion} \cite[Theorem XIII.51]{RS78} asserts that $Q$ is a Dirichlet form if and only if the semigroup $(e^{-tL})$ generated by the positive self-adjoint operator $L$ associated with $Q$ is {\em Markovian}, i.e., $0\leq f\leq 1$ implies $0\leq e^{-tL}f\leq 1$ for all $t\geq 0$.

If $Q$ is a Dirichlet form, then $D(Q) \cap L^\infty(X,m)$ is an algebra with respect to pointwise multiplication and 
$$Q(fg)^{1/2} \leq \norm{g}_\infty Q(f)^{1/2} + \norm{f}_\infty Q(g)^{1/2}  $$
for all $f,g \in D(Q) \cap L^\infty(X,m)$, see \cite[Theorem~1.4.2]{FOT}.

A Dirichlet form $Q$ is called {\em regular} if the following are satisfied:
\begin{itemize} 
 \item $X$ is a locally compact separable metric space and $m$ is a Radon measure of full support.
 \item $D(Q) \cap C_c(X)$ is uniformly dense in $C_c(X)$ and in $D(Q)$ with respect to $\norm{\cdot}_Q$.
\end{itemize}
In this case, the $Q$-{\em capacity} (or simply capacity if $Q$ is fixed) of an open set $O \subseteq X$ is defined by
$$\ca(O) = \inf\{\norm{f}_Q^2 \mid f \in D(Q) \text{ with } f \geq 1\, m\text{-a.e. on } O\}. $$
Here we use the convention $\ca(O)  = \infty$ if there does not exist $f \in D(Q)$ with $f \geq 1$ on $O$. For an arbitrary set $A \subseteq X$, the capacity is defined by
$$\ca(A) = \inf\{\ca(O) \mid O \text{ open with } A \subseteq O\}.$$
The capacity is {\em inner regular}, i.e., for any Borel set $A \subseteq X$ it satisfies
$$\ca(A) = \sup\{\ca(K) \mid K \text{ compact with } K \subseteq A\},$$
see \cite[Theorem~2.1.1]{FOT}. Moreover, by \cite[Lemma~2.2.7]{FOT}, the capacity for compact $K \subseteq X$ can alternatively be described as
$$\ca(K) = \inf\{\norm{f}_Q^2 \mid f \in D(Q) \cap C_c(X) \text{ with } f \geq 1\text{ on } K\}. $$
A subset $A$ of $X$ is called \emph{polar} if $\capa(A)=0$ holds. A property is said to hold \emph{quasi everywhere}, abbreviated q.e., if it holds on the complement of a polar set.

A measurable function $f:X\to [-\infty,\infty]$ is said to be \emph{quasi continuous} if for every $\varepsilon>0$ there is an open set $O$ with $\capa(O)<\varepsilon$ such that $f|_{X\setminus O}$ is finite-valued and continuous. If $Q$ is a regular Dirichlet form, then every $f$ in $D(Q)$ has a unique (up to equality quasi everywhere) quasi continuous representative $\tilde{f}$, cf.~\cite[Theorem 2.3.4]{ChenFuku}.

\subsection{Domination of Dirichlet forms and semigroups} \label{subsection:domination} 
If $U,V$ are sublattices of $L^2(X,m)$, we say that $U$ is an \emph{order ideal} in $V$ if $f \in U$, $g \in V$ and $|g| \leq |f|$ implies $g \in U$.

If $U,V$ are subalgebras of $L^\infty(X,m)$ we say $U$ is an \emph{algebraic ideal} in $V$ if $f \in U$ and $g \in V$ implies $fg \in U$. 

We will frequently use the following characterization. The equivalence of (i) and (ii) is Ouhabaz' domination criterion \cite[Theorem 3.7]{Ouh96}, whereas  the equivalence with (iii) is taken from \cite[Lemma~2.2]{S18}.

\begin{proposition}[Characterization of Domination]\label{prop:DirForm_domination}
Let $\q,\qp$ be Dirichlet forms with associated self-adjoint operators $L,L'$. The following assertions are equivalent.
\begin{enumerate}[(i)]
\item  For all nonnegative $f \in L^2(X,m)$ and all $t \geq 0$ we have
$$e^{-tL}f \leq e^{-tL'} f.$$
\item  $D(\q) \subseteq D(\qp)$, $D(\q)$ is an order ideal in $D(\qp)$ and 
       $$\q(f,g)\geq \qp(f,g)$$ 
       for all non-negative $f,g\in D(\q)$. 
 
\item   $D(\q) \subseteq D(\qp)$, $D(\q)\cap L^\infty(X,m)$ is an algebraic ideal in $D(\qp)\cap L^\infty(X,m)$  and
       $$\q(f,g)\geq \qp(f,g)$$ 
       for all non-negative $f,g\in D(\q)$. 
\end{enumerate}
\end{proposition}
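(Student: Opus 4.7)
The plan is to split the three-way equivalence into two independently-proved pieces, mirroring the two citations in the statement: (i) $\Leftrightarrow$ (ii) is Ouhabaz' classical domination criterion, while (ii) $\Leftrightarrow$ (iii) is a purely form-theoretic interchange between the lattice-ideal and the algebra-ideal structures.

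For (ii) $\Rightarrow$ (i) I would follow Ouhabaz' strategy. Via the Laplace transform it suffices to prove the resolvent inequality $(\lambda + L)^{-1}f \leq (\lambda + L')^{-1}f$ for all $\lambda > 0$ and $f \geq 0$. Writing $u = (\lambda + L)^{-1}f$ and $v = (\lambda + L')^{-1}f$, Markovianity gives $u, v \geq 0$, so $w := (u - v)_+$ satisfies $0 \leq w \leq u$, and the order-ideal hypothesis places $w \in D(Q)$. Testing the two resolvent identities $\lambda\langle u, w\rangle + Q(u, w) = \langle f, w\rangle$ and $\lambda\langle v, w\rangle + Q'(v, w) = \langle f, w\rangle$ against $w$ and subtracting yields $\lambda\norm{w}^2 + Q(u, w) - Q'(v, w) = 0$; using $Q(u, w) \geq Q'(u, w)$ on the nonnegative pair $(u, w)$ together with the Dirichlet-form inequality $Q'((u-v)_-, w) \leq 0$ forces $\norm{w}^2 \leq 0$, hence $u \leq v$. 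For the converse (i) $\Rightarrow$ (ii), weak differentiation of the semigroup inequality at $t = 0$ recovers $Q \geq Q'$ on nonnegative arguments, the domain inclusion $D(Q) \subseteq D(Q')$ comes from the standard correspondence between semigroup domination and form-domain inclusion, and the positivity-preserving character of the dominating semigroup yields the ideal property.

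The implication (ii) $\Rightarrow$ (iii) is direct: given $f \in D(Q) \cap L^\infty(X, m)$ and $g \in D(Q') \cap L^\infty(X, m)$, the product $fg$ lies in $D(Q') \cap L^\infty(X, m)$ by the algebra property of Dirichlet-form domains intersected with $L^\infty$, while $|fg| \leq \norm{g}_\infty |f|$ with $\norm{g}_\infty |f| \in D(Q)$ places $fg$ in $D(Q)$ via the order-ideal hypothesis. The form inequality transfers unchanged.

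The main obstacle is (iii) $\Rightarrow$ (ii), upgrading an algebra-ideal property on the bounded part to a lattice-ideal property on the full domain. Here I would argue by truncation and approximation: given $f \in D(Q)$ and $g \in D(Q')$ with $|g| \leq |f|$, the normal contractions $f_n := ((-n) \vee f) \wedge n$ and $g_n := ((-n) \vee g) \wedge n$ lie in $D(Q) \cap L^\infty(X,m)$ and $D(Q') \cap L^\infty(X,m)$ respectively, satisfy $|g_n| \leq |f_n|$, and converge to $f, g$ in the respective form norms. The task is then to exhibit each $g_n$ as a product of a bounded element of $D(Q)$ built from $f_n$ with a bounded element of $D(Q')$, so that the algebra-ideal hypothesis gives $g_n \in D(Q)$; the technical delicacy is that the natural candidate $g_n / f_n$ need not itself lie in any form domain and must instead be approximated by expressions assembled from normal contractions of $f_n$ and $g_n$. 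A uniform $Q$-bound on $(g_n)$ obtained from $Q \geq Q'$ on nonnegative arguments, together with closedness of $Q$, then delivers $g \in D(Q)$.
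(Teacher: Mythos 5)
The paper itself gives no proof of this proposition; it simply cites two sources: the equivalence of (i) and (ii) is Ouhabaz' domination criterion \cite[Theorem~3.7]{Ouh96}, and the equivalence with (iii) is \cite[Lemma~2.2]{S18}. Your two-step decomposition therefore matches the paper exactly, and part of your sketch is solid: the resolvent argument for (ii)~$\Rightarrow$~(i) is correct ($w=(u-v)_+$ lies in $D(Q')$ by the lattice property and then in $D(Q)$ by the order-ideal hypothesis since $0\leq w\leq u$, and the inequality $Q'((u-v)_-,(u-v)_+)\leq 0$ valid for any Dirichlet form closes the estimate), and the derivation of (iii) from (ii) via $|fg|\leq \norm{g}_\infty|f|$ is complete.

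The two remaining implications are not actually proved. For (i)~$\Rightarrow$~(ii), ``weak differentiation at $t=0$'' plausibly yields the form inequality on nonnegative elements, but the domain inclusion and, above all, the order-ideal property are the substantive content of Ouhabaz' theorem and do not follow from ``positivity-preserving character'' alone. Ouhabaz obtains them by recasting domination as invariance of the closed convex set $\{(u,v)\in L^2\times L^2 : |u|\leq v\}$ under $e^{-tL}\oplus e^{-tL'}$ and applying his form-theoretic characterization of invariant convex sets; you would need to reproduce or cite that machinery rather than assert the conclusion.

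For (iii)~$\Rightarrow$~(ii) you correctly diagnose the obstacle but do not close it. After truncating to $0\leq g\leq f$ with $f\in D(Q)\cap L^\infty(X,m)$, $g\in D(Q')\cap L^\infty(X,m)$, a natural candidate is $h_\epsilon:=gf/(f+\epsilon)$: since $\phi_\epsilon(x)=\frac{1}{x+\epsilon}-\frac{1}{\epsilon}$ is Lipschitz on $[0,\infty)$ with $\phi_\epsilon(0)=0$, one has $\phi_\epsilon(f)\in D(Q)\cap L^\infty(X,m)$, hence $g/(f+\epsilon)=g\phi_\epsilon(f)+g/\epsilon\in D(Q')\cap L^\infty(X,m)$ by the algebra structure, and then $h_\epsilon=f\cdot g/(f+\epsilon)\in D(Q)\cap L^\infty(X,m)$ by the algebraic-ideal hypothesis. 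But to pass to the limit $\epsilon\to 0$ and conclude $g\in D(Q)$ by closedness one still needs a uniform bound on $Q(h_\epsilon)$, and neither your sketch nor the algebra estimate $Q(fk)^{1/2}\leq\norm{k}_\infty Q(f)^{1/2}+\norm{f}_\infty Q(k)^{1/2}$ supplies it, since $Q(g/(f+\epsilon))$ is not controlled (that factor is only in $D(Q')$). This is the genuine gap, and it is exactly what \cite[Lemma~2.2]{S18} is invoked for.
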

If $Q$ and $Q'$ satisfy one of the conditions of this proposition, we say that $Q'$ {\em dominates} $Q$ and write $Q \preceq Q'$. Similarly, in this situation we write $(e^{-tL}) \preceq (e^{-tL'})$ and say that the semigroup  $(e^{-tL'})$ {\em dominates} the semigroup $(e^{-tL})$. 

Domination also induces an order relation on the set of all Dirichlet forms on $L^2(X,m)$.  Note that in general $\q \preceq \qp$ does not imply $\q \sqsubseteq \qp$ nor the other way round.

\section{The maximal dominating form and an abstract characterization of sandwiched semigroups}\label{sec:sandwiched_forms_abstract}

For every Dirichlet form $\q$ there is a maximal Dirichlet form $\QM$ (with respect to the natural order) that dominates the given Dirichlet form $\q$. In this section we describe the construction of this maximal form and give an abstract characterization of all Dirichlet forms $Q'$ that satisfy $\q \preceq  Q' \preceq \QM$ provided that $\QM$ is an extension of $\q$. 

We denote by $(T_t)$ the semigroup generated by $\q$ and by $(T_t^{(M)})$ the semigroup generated by $\QM$. According to the discussion in Subsection~\ref{subsection:domination}, any self-adjoint $C_0$-semigroup $(S_t)$ with 
$$(T_t) \preceq (S_t) \preceq  (T_t^{(M)}) $$
corresponds to a Dirichlet form $Q'$ with $\q \preceq  Q' \preceq \QM$. Hence, our result can be seen as an abstract characterization of all semigroups sandwiched between $(T_t)$ and $(T_t^{(M)})$. 

\subsection{The active main part and the killing part} We will next recall the definition of the active main part and the killing part of a Dirichlet form. For two concrete examples see Examples \ref{example:dirichlet integral}, \ref{example:fractional} below.

Let $\q$ be a Dirichlet form on $L^2(X,m)$. For $\varphi \in D(\q)$ with $0 \leq \varphi \leq 1$ we define the domain of the quadratic form $\tilde{\q}_\varphi$ on $L^2(X,m)$ by  
$$D(\tilde{\q}_\varphi) = \{f \in L^2(X,m) \cap L^\infty(X,m) \mid f\varphi, f^2 \varphi \in D(\q)\},$$
on which it acts by
$$\tilde{\q}_\varphi(f) = \q(\varphi  f) - \q(\varphi f^2,\varphi).$$
Since $D(Q) \cap L^\infty(X,m)$ is an algebra, we have $D(Q)\cap L^\infty(X,m) \subseteq D(\tilde{\q}_\varphi)$. The form $\tilde{\q}_\varphi$ is closable on $L^2(X,m)$. Indeed, \cite[Theorem~3.1]{S18} shows that $\tilde{\q}_\varphi$ is lower semicontinuous on its domain with respect to local convergence in measure and hence it is lower semicontinuous on its domain with respect to $L^2$-convergence. We denote its closure by $Q_\varphi$. The next proposition summarizes further important properties of $\q_\varphi$.

\begin{proposition}\label{proposition:properties qp}
Let $\varphi,\psi \in D(Q)$ with $0\leq \varphi \leq \psi \leq 1$.  
 \begin{enumerate}[(a)]
 \item $Q_\varphi$ is a Dirichlet form and its domain satisfies
 $$D(\q_\varphi) \cap L^\infty(X,m) = D(\tilde{\q}_\varphi) = \{f \in L^2(X,m) \cap L^\infty(X,m) \mid f \varphi \in D(\q)\}. $$
\item $D(\q) \subseteq D(\q_\varphi)$ and 
$$\q_\varphi(f) \leq \q(f), \quad f \in D(Q).$$
\item 
$D(Q_\psi) \subseteq D(Q_\varphi) $ and 
$$\q_\varphi(f) \leq \q_\psi(f), \quad f \in D(\q_\psi).$$

 \end{enumerate}
\end{proposition}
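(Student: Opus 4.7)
The guiding heuristic for the whole proposition is that $\tilde{q}_\varphi(f)$ should be interpreted as the energy of $f$ weighted by $\varphi^2$, schematically $\int \varphi^2\,d\Gamma(f)$ with $\Gamma$ the carré du champ of $Q$. Indeed, in the diffusion case one computes directly
\begin{equation*}
Q(\varphi f,\varphi f)-Q(\varphi f^2,\varphi)=\int \varphi^2\,|\nabla f|^2\,dx,
\end{equation*}
and a similar Leibniz expansion on $D(Q)\cap L^\infty$ governs the general case. With this picture in mind, (b) is the statement $\varphi^2\leq 1$, (c) is $\varphi^2\leq\psi^2$, and the Dirichlet property in (a) is inherited from $Q$ because a normal contraction decreases $\Gamma(f)$ pointwise.

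For (a), density of $D(\tilde{q}_\varphi)$ follows from $D(Q)\cap L^\infty\subseteq D(\tilde{q}_\varphi)$, which is a direct consequence of the algebra property of $D(Q)\cap L^\infty$. Closability on $L^2(X,m)$ is the already-cited lower semicontinuity result from \cite{S18}, so $Q_\varphi$ is a closed nonnegative form. To verify the Markov property it suffices to check it on the core: for $f\in D(\tilde{q}_\varphi)$, the identity $(f_+\wedge 1)\varphi=(f\varphi)_+\wedge\varphi$ together with the lattice property of $D(Q)$ puts $f_+\wedge 1$ into $D(\tilde{q}_\varphi)$, and the inequality $\tilde{q}_\varphi(f_+\wedge 1)\leq \tilde{q}_\varphi(f)$ is obtained by expanding both sides with polarization and exploiting that normal contractions decrease the (formal) $\Gamma(f)$-part of the expression while the remainder terms cancel.

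The genuinely delicate step is the domain equality
$D(\tilde{q}_\varphi)=\{f\in L^2\cap L^\infty\mid f\varphi\in D(Q)\}$
(showing $f^2\varphi\in D(Q)$ is automatic once $f\varphi\in D(Q)$ for bounded $f$) and the accompanying identification $D(Q_\varphi)\cap L^\infty = D(\tilde{q}_\varphi)$. This is where I expect the main obstacle to lie, since one cannot simply invoke the algebra property ($f$ itself need not belong to $D(Q)$). My plan is to reduce to $f\geq 0$ by lattice arguments and approximate $f$ by $f_n\in D(Q)\cap L^\infty$ in a way that $f_n\varphi\to f\varphi$ in the form norm, exploiting normal contractions applied to $f\varphi$ and the density of $D(Q)\cap L^\infty$ in $D(Q)$ to propagate $f^2\varphi\in D(Q)$ from the approximants; the second identification then follows from the lower semicontinuity of $\tilde{q}_\varphi$ with respect to convergence in measure combined with uniform $L^\infty$-bounds that survive under closure.

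For (b), the inclusion $D(Q)\cap L^\infty\subseteq D(\tilde{q}_\varphi)$ is immediate from the algebra property, and the inequality $\tilde{q}_\varphi(f)\leq Q(f)$ is obtained by polarizing $Q(\varphi f,\varphi f)-Q(\varphi f^2,\varphi)$ and regrouping; the regrouped expression displays a nonnegative remainder plus the ``$\Gamma(f)$ weighted by $\varphi^2\leq 1$'' term, so it is bounded above by $Q(f)$. Passing to general $f\in D(Q)$ is done by truncation $f_n=(f\wedge n)\vee(-n)$, which converges in $Q$-norm, combined with $L^2$-closedness of $Q_\varphi$ and the just-established bound on the approximants. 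Part (c) is completely analogous: the same algebraic expansion, performed this time with $\psi$ in place of $1$, yields $\tilde{q}_\psi(f)-\tilde{q}_\varphi(f)\geq 0$ because $\varphi^2\leq \psi^2$, and the inclusion $D(Q_\psi)\subseteq D(Q_\varphi)$ follows by first establishing $D(\tilde{q}_\psi)\subseteq D(\tilde{q}_\varphi)$ (another instance of the domain identification from (a), namely $f\psi\in D(Q)\Rightarrow f\varphi\in D(Q)$ for bounded $f$, handled by the same approximation scheme) and then extending to the closures via the inequality.
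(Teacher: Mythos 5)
The paper's own proof is a single citation to \cite[Theorem~3.18]{S18}, where the parts $Q_\varphi$ are studied via a lower-semicontinuous extension to the space of all $m$-a.e.\ defined measurable functions. Your proposal instead attempts a direct proof, but several of the steps you identify as ``delicate'' or ``my plan is\ldots'' are never actually carried out, and the steps you do carry out rest on a heuristic that is not available at the stated level of generality.

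The central problem is that the entire argument is organized around the carré du champ picture $\tilde q_\varphi(f)\approx\int\varphi^2\,d\Gamma(f)$, so that ``(b) is $\varphi^2\le 1$, (c) is $\varphi^2\le\psi^2$, and normal contractions decrease the $\Gamma(f)$-part.'' But the proposition is stated for an arbitrary Dirichlet form on $L^2(X,m)$ --- not regular, not admitting a carré du champ, and with no Beurling--Deny decomposition to fall back on. The inequalities $\tilde q_\varphi(f)\leq Q(f)$ and $\tilde q_\varphi(f)\leq \tilde q_\psi(f)$ therefore need genuine algebraic arguments on $D(Q)\cap L^\infty$, and ``polarizing and regrouping'' is not a proof: the ``nonnegative remainder'' you refer to is exactly what must be exhibited, and it is not exhibited. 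Likewise the Markov-property step (``normal contractions decrease the formal $\Gamma(f)$-part while the remainder terms cancel'') is a restatement of the claim, not a derivation.

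Beyond this, the domain assertions are left as a ``plan.'' You parenthetically say that $f\varphi\in D(Q)$ with $f$ bounded makes $f^2\varphi\in D(Q)$ ``automatic,'' but it is not: $f$ itself need not be in $D(Q)$, so the algebra property of $D(Q)\cap L^\infty$ does not apply, and writing $f^2\varphi=(f\varphi)^2/\varphi$ runs into the unboundedness of $1/\varphi$ near $\{\varphi=0\}$. The approximation scheme you sketch (approximate $f$ by $f_n\in D(Q)\cap L^\infty$ so that $f_n\varphi\to f\varphi$ in form norm and propagate $f_n^2\varphi\in D(Q)$ to the limit) requires $f_n^2\varphi$ to be $Q$-Cauchy, which is not addressed. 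The same issue affects the identification $D(Q_\varphi)\cap L^\infty=D(\tilde q_\varphi)$: an element of $D(Q_\varphi)\cap L^\infty$ is an $L^2$-limit of a $\|\cdot\|_{Q_\varphi}$-Cauchy sequence in $D(\tilde q_\varphi)$, but $\tilde q_\varphi$-Cauchy does not obviously make $(\varphi f_n)$ $Q$-Cauchy, and the approximants need not be uniformly bounded; invoking ``lower semicontinuity combined with uniform $L^\infty$-bounds that survive closure'' does not bridge that gap. Finally, the Markov-property argument assumes the domain equality (you conclude $f_+\wedge 1\in D(\tilde q_\varphi)$ from $(f_+\wedge 1)\varphi\in D(Q)$), which is circular given that the domain equality is itself the unproved step.

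In short, the proposal is a plausible heuristic roadmap (and for local regular forms with a carré du champ it could likely be fleshed out), but it does not constitute a proof at the stated generality. The paper avoids all of this by delegating to \cite{S18}, where these facts are established by working with the lower-semicontinuous extension of $Q_\varphi$ to measurable functions, which makes the required monotone approximation and truncation arguments go through without any pointwise $\Gamma$.
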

\begin{proof}
 This follows from  \cite[Theorem~3.18]{S18}. The proofs given there treat an extension of $Q_\varphi$ to all measurable $m$-a.e. defined functions that is lower semicontinuous with respect to local convergence in measure.   Restricting this form with larger domain to $L^2(X,m)$ yields all the claims.
 \end{proof}

\begin{remarks}
 Part (a) of this proposition is important because it yields a formula for $\q_\varphi$ for bounded functions in its domain. Namely, for $f,g \in  D(\q_\varphi) \cap L^\infty(X,m)$ we have $f,g \in D(\tilde{\q}_\varphi)$ and hence 
 $$\q_\varphi(f,g) = \tilde{\q}_\varphi(f,g)  = \q(\varphi f,\varphi g) - \q(\varphi fg,\varphi).$$
 For the last equality, we used the definition of $\tilde{\q}_\varphi$ and polarization.
\end{remarks}

\begin{definition}[Active main part]
The {\em active main part $\QM$} of $\q$ is defined as follows: Its domain $D(\QM)$ consists of all $f \in L^2(X,m)$ that satisfy $f \in D(\q_\varphi)$ for all $\varphi \in D(\q)$ with $0 \leq \varphi \leq 1$ such that 
$$\{\varphi \in D(\q) \mid 0 \leq \varphi \leq 1\} \to [0,\infty),\quad \varphi \mapsto \q_\varphi(f)$$
is bounded. On it $\QM$ acts by
$$\QM(f) = \sup\{\q_\varphi(f) \mid \varphi \in D(\q) \text{ with } 0\leq \varphi \leq 1\}.$$
\end{definition}
Since $\varphi \mapsto Q_\varphi(f)$ is monotone increasing, the form $\QM$ is indeed a Dirichlet form, see \cite[Theorem~3.6]{S18}.  It turns out that $\QM$ is the maximal Dirichlet form with respect to the natural order that dominates $\q$, i.e., $\q \preceq \QM$ and for all Dirichlet forms $\qp$ with $\q \preceq \qp$ we have $\qp \sqsubseteq \QM$, see \cite[Theorem 3.19]{S18}. However, $\QM$ need not be an extension of $Q$ and hence we introduce the following definition.
\begin{definition}[Killing part]
 The difference 
$$\QK = \q - \QM$$
with domain $D(\QK) = D(Q)$ is called the {\em killing part} of $Q$. 
\end{definition}
  The killing part is a local and positive quadratic form. Both properties are a consequence of $\QK$ being monotone, see \cite[Lemma~3.11]{S18} for monotonicity and \cite[Lemma~B.1]{S18} for how monotonicity implies the other properties. In particular, the value of $\QK(f)$ only depends on $|f|$ and not on the sign of $f$.


We illustrate these objects with an example. It shows that the active main part is an abstract way of constructing operators with Neumann boundary conditions from the quadratic forms leading to Dirichlet boundary conditions. 
 
\begin{example}[Dirichlet and Neumann Laplacian on domains]\label{example:dirichlet integral}
 Let $\Omega \subseteq \IR^n$ be open (or more generally let $\Omega$ be a Riemannian manifold) and let $V \in L^1_{\rm loc}(\Omega)$ be nonnegative.  We consider the Dirichlet form $\env$ with domain   $D(\env) = \{f \in H^1(\Omega) \mid V ^{1/2}f \in L^2(\Omega)\}$, on which it acts by
 $$\env(f) = \int_{\Omega} |\nabla f |^2 dx + \int_\Omega  |f|^2 V dx.$$
 The associated operator is the self-adjoint realization of the Schrödinger operator $H = -\Delta  + V$ with (abstract) Neumann boundary conditions, which we denote by $H^{(N)}$. Moreover, we let $\edv$ be the restriction of $\env$ to $D(\edv) = \{f \in H_0^1(\Omega) \mid V ^{1/2}f \in L^2(\Omega)\}$. This is a regular Dirichlet form and the associated operator is the self-adjoint realization of the Schrödinger operator $H = -\Delta  + V$ with (abstract) Dirichlet boundary conditions, which we denote by $H^{(D)}$.

The active main part of $\edv$ is given by  $\cE^{(N)}_0$.  Hence, the self-adjoint operator associated to the active main part is  $-\Delta^{(N)}$. For $f \in D(\edv)$, the killing part of $\edv$ is given by
 $$(\edv)^{(k)}(f) = \int_\Omega |f|^2 V dx.$$
In particular, if $V = 0$, this discussion shows that a Dirichlet form $Q$ satisfies $\cE^{(D)}_0 \preceq Q \preceq (\cE^{(D)}_0)^{(M)}$ if and only if the associated semigroup $(S_t)$ satisfies $(e^{t \Delta^{(D)}}) \preceq (S_t) \preceq   (e^{t \Delta^{(N)}})$. Hence, forms sandwiched between $\cE^{(D)}_0 $ and $(\cE^{(D)}_0)^{(M)} = \cE^{(N)}_0$ correspond to  semigroups sandwiched between the Dirichlet and the Neumann   semigroup of the Laplacian. This is precisely the situation studied in \cite{AW03}.

 \begin{proof}
   Here we only sketch the main ideas of the proof. For the details we refer to \cite[Example~3.9]{S18}.  We only consider bounded functions, the general case can be treated through approximations. 
   
   Let $f \in   H^1(\Omega) \cap L^\infty(\Omega)$  and let $\varphi \in C_c^\infty(\Omega)$ with $0 \leq \varphi \leq 1$. A direct computation using the product rule for $\nabla$ shows $f \in D((\edv)_\varphi)$ and 
 $$(\edv)_\varphi(f) = \int_{\Omega} \varphi^2 |\nabla f|^2 dx.$$
 Letting $\varphi \nearrow 1$ and taking into account that $C_c^\infty(\Omega)$ is dense in $D(\edv)$ yields $f \in D((\edv)^{(M)})$ and the formula for the action of $(\edv)^{(M)}$.
 
 Similarly, if $f \in D((\edv)^{(M)}) \cap L^\infty(\Omega)$,  by the definition of $(\edv)_\varphi$ and the active main part, we have $\varphi f \in D(\edv) =  H^1_0(\Omega) \cap L^2(\Omega, V \cdot dx)$ for every $\varphi \in C_c^\infty(\Omega)$. This yields $\nabla f \in \vec L^2_{\rm loc}(\Omega)$. With this at hand, an application of the product rule for $\nabla$ as above shows $(\edv)_\varphi(f) = \int_{\Omega} \varphi^2 |\nabla f|^2 dx.$ Since $(\edv)_\varphi(f) \leq (\edv)^{(M)}(f)$ and $\varphi$ is arbitrary, we conclude $\nabla f \in \vec L^2(\Omega)$ so that $f \in H^1(\Omega)$. 
 
 The statement on the killing part is an immediate consequence. 
 \end{proof}
\end{example}

\begin{example}[Fractional Laplacians] \label{example:fractional}
 As above we let $\Omega \subseteq \R^n$ be open. For a background on fractional Sobolev spaces we refer to \cite{NPV}. For $0 < s < 1$, we denote by $Q^{s,(N)}$ the Dirichlet form with domain $D(Q^{s,(N)}) = W^s(\Omega)$ on which it acts by
 $$Q^{s,(N)}(f) =  \frac{1}{2}\int_{\Omega \times \Omega} \frac{|f(x) - f(y)|^2}{|x-y|^{n + 2s}} dx\,dy.$$
 The restriction of this form to $W^s_0(\Omega)$ is denoted by $Q^{s,(D)}$, it is a regular Dirichlet form. Note that at least if $\Omega$ is bounded and has $C^\infty$-boundary, the spaces $W^s_0(\Omega)$ and $W^s(\Omega)$ coincide for $0<s\leq \frac 1 2$ by \cite[Theorem 11.1]{LM72}, which makes the problem of finding the Dirichlet forms sandwiched between $Q^{s,(D)}$ and $Q^{s,(N)}$ trivial. 
 
 It is well-known that the associated self-adjoint operators $H_s^{(N)}$ and  $H_s^{(D)}$ are restrictions of the \emph{restricted fractional Laplacian} $H_s$ given by
 $$ H_s f(x) =  {\rm P.V.} \int_{\Omega} \frac{f(x)-f(y)}{|x-y|^{n+2s}}dy = \lim_{\varepsilon \to 0+} \int_{\Omega\setminus B_\varepsilon(x)} \frac{f(x)-f(y)}{|x-y|^{n+2s}}dy .$$
 Hence, they can be viewed as realizations of $H_s$ with abstract Neumann and Dirichlet boundary conditions. Note that we ignore a constant so that our fractional Laplacian is only a constant multiple of the 'usual' restricted fractional Laplacian, cf. \cite[Section~3]{NPV}. Similar as in the previous example the active main part of $Q^{s,(D)}$ is $Q^{s,(N)}$.
 \begin{proof}
  Here we only show the statement on the active main part of $Q^{s,(D)}$, the rest is well-known. Since $Q^{s,(N)}$ and $(Q^{s,(D)})^{(M)}$ are Dirichlet forms, it suffices to prove $D(Q^{s,(N)}) \cap L^\infty(\Omega) = D((Q^{s,(D)})^{(M)}) \cap L^\infty(\Omega)$ and that $Q^{s,(N)}$ and $(Q^{s,(D)})^{(M)}$ agree on these sets (use that bounded functions are dense in the domains of Dirichlet forms, see \cite[Theorem~1.4.2]{FOT}). 
  
  We first proof that $Q^{s,(N)}$ is a restriction of $(Q^{s,(D)})^{(M)}$ (on $L^\infty(\Omega)$).   Let $f \in W^s(\Omega) \cap L^\infty(\Omega)$ and let $\varphi \in W_0^s(\Omega)$ with $0 \leq \varphi \leq 1$. Then $f \varphi \in W_0^s(\Omega)$. We infer  
  \begin{align*}
  Q^{s,(D)}(\varphi  f) - Q^{s,(D)}(\varphi f^2,\varphi) &= \frac{1}{2}\int_{\Omega \times \Omega} \frac{(\varphi(x)f(x) - \varphi(y)f(y))^2}{|x-y|^{n + 2s}} dx\,dy \\
  &\quad - \frac{1}{2}\int_{\Omega \times \Omega} \frac{(\varphi(x)f(x)^2 - \varphi(y)f(y)^2)(\varphi(x)-\varphi(y))}{|x-y|^{n + 2s}} dx\,dy\\
  &= \frac{1}{2}\int_{\Omega \times \Omega} \varphi(x)\varphi(y) \frac{|f(x) - f(y)|^2}{|x-y|^{n + 2s}} dx\,dy.
  \end{align*}
  Taking the supremum over such $\varphi$ yields $f \in  D((Q^{s,(D)})^{(M)})$ and $(Q^{s,(D)})^{(M)}(f) = Q^{s,(N)}(f)$. 
  
  It remains to prove  $D((Q^{s,(D)})^{(M)}) \cap L^\infty(\Omega) \subseteq W^s(\Omega)$. Let  $f \in D((Q^{s,(D)})^{(M)}) \cap L^\infty(\Omega)$. For $\varphi \in W_0^s(\Omega)$  with $0 \leq \varphi \leq 1$,  we have by definition of the main part $\varphi f, \varphi f^2  \in W_0^s(\Omega)$ and 
  \begin{align*}
   (Q^{s,(D)})^{(M)} (f) & \geq  Q^{s,(D)}(\varphi  f) - Q^{s,(D)}(\varphi f^2,\varphi)\\
   &= \frac{1}{2}\int_{\Omega \times \Omega} \varphi(x)\varphi(y) \frac{|f(x) - f(y)|^2}{|x-y|^{n + 2s}} dx\,dy.
  \end{align*}
  For the last equality we used the same computation as above. Since $\varphi$ was arbitrary, this shows $f \in W^s(\Omega)$.  
 \end{proof}

%
 
\end{example}

\begin{remarks}
 These examples show that it is a good intuition to think of a regular Dirichlet form $\q$ with $\QK = 0$ as being a form with `Dirichlet type' boundary conditions and $\QM$ being the 'same' form with `Neumann type' boundary conditions.  
\end{remarks}

\subsection{An abstract characterization of sandwiched semigroups and forms}

The main abstract result of this paper is the following characterization of Dirichlet forms sandwiched between a Dirichlet form without killing and its active main part.  

\begin{theorem}\label{theorem:abstract characterization}
 Let $\q,\qp$ be Dirichlet forms on $L^2(X,m)$ with $\QK = 0$. The following assertions are equivalent.
 \begin{enumerate}[(i)]
  \item $\q \preceq \qp \preceq \QM$.
  \item \begin{enumerate}[(a)]
          \item $D(Q') \subseteq D(Q^{(M)})$ and $D(Q')$ is an order ideal in $D(Q^{(M)})$.   
         \item $\qp - \QM$ is a positive and local form on $D(\qp)$.
         \item $Q'$ is an extension of $Q$.
        \end{enumerate}
 
 \end{enumerate}
%
%
\end{theorem}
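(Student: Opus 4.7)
I will translate both conditions into the Ouhabaz characterization of domination (Proposition~\ref{prop:DirForm_domination}), using throughout that $\QK=0$ is equivalent to $Q$ being the restriction of $\QM$ to $D(Q)$.

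For the direction (ii) $\Rightarrow$ (i), the domination $Q' \preceq \QM$ is immediate from Proposition~\ref{prop:DirForm_domination}: (a) provides the domain inclusion and the order ideal property, while the positivity half of (b) supplies the bilinear form inequality $Q'(f,g) \geq \QM(f,g)$ for nonnegative $f,g \in D(Q')$. For $Q \preceq Q'$, (c) yields both $D(Q) \subseteq D(Q')$ and the equality $Q = Q'$ on $D(Q) \times D(Q)$; the order ideal property of $D(Q)$ inside $D(Q')$ then follows from the order ideal property of $D(Q)$ inside $D(\QM)$ (valid because $\QM$ dominates $Q$ by definition) together with $D(Q') \subseteq D(\QM)$ from (a).

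The direction (i) $\Rightarrow$ (ii) is the substantial one. Part (a) again reads off directly from $Q' \preceq \QM$ via Proposition~\ref{prop:DirForm_domination}. For (c), the sandwich argument
\begin{equation*}
Q(f,g) \;\geq\; Q'(f,g) \;\geq\; \QM(f,g) \;=\; Q(f,g), \qquad f,g \in D(Q),\ f,g \geq 0,
\end{equation*}
(where the last equality uses $\QK=0$) forces $Q'=Q$ on nonnegative pairs in $D(Q)$, and splitting $f=f_+-f_-$, $g=g_+-g_-$ together with bilinearity extends the equality to all of $D(Q)\times D(Q)$, exhibiting $Q'$ as an extension of $Q$.

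The only step requiring more than routine assembly is the locality of $R := Q' - \QM$ on $D(Q')$ in (b). My plan is to identify $R$ with the killing part $(Q')^{(k)}$ of $Q'$; after this identification, positivity, locality and even monotonicity are automatic from the general properties of killing parts already recorded right before Example~\ref{example:dirichlet integral}. The identification reduces to showing $(Q')^{(M)} = \QM$. One inclusion uses that $\QM$ dominates $Q'$, so by maximality of $(Q')^{(M)}$ among Dirichlet forms dominating $Q'$ one obtains $\QM \sqsubseteq (Q')^{(M)}$. The other uses transitivity of semigroup domination: from $Q \preceq Q'$ and $Q' \preceq (Q')^{(M)}$ one gets $Q \preceq (Q')^{(M)}$, and maximality of $\QM$ among Dirichlet forms dominating $Q$ forces $(Q')^{(M)} \sqsubseteq \QM$. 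Antisymmetry of $\sqsubseteq$ gives equality, and hence $R = Q' - (Q')^{(M)} = (Q')^{(k)}$, as desired.
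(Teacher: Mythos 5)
Your proof is correct and agrees with the paper's proof for parts (a), (c), and the direction (ii)$\Rightarrow$(i); the interesting difference is your treatment of the locality in (b). The paper proves locality of $Q'-\QM$ by a direct computation: for nonnegative, bounded $f,g\in D(Q')$ with $fg=0$ it expands
\begin{equation*}
Q'(f,g)-Q_\varphi(f,g)=Q'((1-\varphi)f,g)+Q'(\varphi f,(1-\varphi)g)
\end{equation*}
(using that $Q'$ extends $Q$ and that the cross term $Q(\varphi fg,\varphi)$ drops out because $fg=0$), shows each summand is nonpositive via the Markov property of $Q'$, and passes to the supremum over $\varphi$. You instead prove the structural identity $(Q')^{(M)}=\QM$ by sandwiching in the natural order: $Q'\preceq\QM$ gives $\QM\sqsubseteq(Q')^{(M)}$ by maximality of $(Q')^{(M)}$ over $Q'$, while transitivity of domination gives $Q\preceq Q'\preceq(Q')^{(M)}$ and then $(Q')^{(M)}\sqsubseteq\QM$ by maximality of $\QM$ over $Q$; antisymmetry of $\sqsubseteq$ yields equality, so $Q'-\QM=(Q')^{(k)}$ is a killing part and its positivity, locality, and monotonicity are then free from the cited properties of killing parts. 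This is shorter and conceptually cleaner, at the cost of leaning on the full strength of the maximality statement \cite[Theorem 3.19]{S18}; the paper's computation is more self-contained and only needs the formula for $Q_\varphi$ and the Markov property. Both are valid. One small point worth making explicit in a write-up of your route: the identity $(Q')^{(M)}=\QM$ in particular forces $D(Q')\subseteq D((Q')^{(M)})=D(\QM)$, so that the difference form $Q'-\QM$ really is defined on all of $D(Q')$; you implicitly use this, and it is already supplied by (a), so there is no gap.
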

\begin{proof}
(i)$\implies$(ii): (a) This is a consequence of Proposition~\ref{prop:DirForm_domination}.

 (b) The positivity of $\qp - \QM$  follows directly from  $\qp \preceq \QM$, cf. Proposition~\ref{prop:DirForm_domination}. In order to see that $\qp - \QM$ is local, we let $f,g \in D(\qp)$ with $fg = 0$. Without loss of generality we  may assume  $f,g \geq 0$, for otherwise we can decompose $f,g$ into positive and negative parts and use $f_\pm g_\pm = 0$. Since $\qp$ and $\QM$ are Dirichlet forms, we can further assume that $f,g$ are bounded.  As we already established positivity, it remains to prove $\qp(f,g) - \QM(f,g) \leq 0$.  

Let   $\varphi \in D(\q)$ with $0 \leq \varphi \leq 1$. According to Proposition~\ref{prop:DirForm_domination} we have $f\varphi, g\varphi \in D(Q)$, so that by Proposition~\ref{proposition:properties qp} $f,g \in D(Q_\varphi)$. Using $fg = 0$ and $Q \preceq \qp$ we obtain 
\begin{align*}
  \qp(f,g) - Q_\varphi(f,g)  &= \qp(f,g) - Q(\varphi f,\varphi g) + \q(\varphi fg,\varphi) \\
  &=\qp(f,g) - \qp(\varphi f,\varphi g)\\
  &= \qp((1-\varphi)f,g) + \qp(\varphi f, (1-\varphi)g).
\end{align*}
The functions $\eta=(1-\varphi)f$ and $\zeta=g$ are nonnegative and satisfy $\eta\zeta=0$.  The Dirichlet form property of $Q'$ implies
\begin{align*}
Q'(\eta+\zeta)=Q'(\abs{\eta+\zeta})=Q'(\abs{\eta-\zeta})\leq Q'(\eta-\zeta),
\end{align*}
from which we deduce $Q'(\eta,\zeta)\leq 0$ by bilinearity. The same argument applies to $\eta=\varphi f$ and $\zeta=(1-\varphi)g$ so that we obtain
\begin{equation*}
Q'(f,g)-Q_\varphi(f,g)=Q'((1-\varphi)f,g)+Q'(\varphi f,(1-\varphi)g)\leq 0.
\end{equation*}
By the definition of $\QM$ we can choose $\varphi$ such that $Q_\varphi(f,g)$ is arbitrarily close to $\QM(f,g)$ and hence  obtain locality.

(c) The domination $\q \preceq \qp \preceq \QM$ and $\QK = 0$ yield for all nonnegative $f,g \in D(Q)$ the inequality
$$\q(f,g) = \QM(f,g) \leq Q'(f,g) \leq Q(f,g).$$
By splitting functions into positive and negative parts this shows $Q = Q'$ on $D(Q)$.
 
%
%

(ii)$\implies$(i): $\qp \preceq \QM$ follows directly from (a) and (b) and the characterization of domination Proposition~\ref{prop:DirForm_domination}.

$\q \preceq \qp$: Since $D(\qp)$ is contained in $D(\QM)$ and $D(Q)$ is an order ideal in $D(\QM)$ we obtain that $D(\q)$ is also an order ideal in $D(\qp)$. Since $\qp$ is an extension of $\q$, this already implies domination. 
\end{proof}

We can rephrase this theorem slightly. Let $Q$ be a Dirichlet form with $\QK = 0$. We say that a pair $(F,q)$ consisting of a vector lattice $F \subseteq D(\QM)$ that is an order ideal in $D(\QM)$ and a quadratic form $q$ with $D(q) = F$ is an {\em abstract admissible pair} for $Q$, if it satisfies the following properties:
\begin{itemize}
 \item $D(Q) \subseteq D(q)$ and $q(f) = 0$ for $f \in D(Q)$,
 \item  $q$ is local and positive,
 \item the form $Q_{F,q}=\QM|_F+ q$ is closed. 
\end{itemize}

\begin{corollary}
 Let $\q$ be  Dirichlet forms with $\QK = 0$. The following assertions are equivalent.
 \begin{enumerate}[(i)]
  \item $\qp$ is a Dirichlet form with $\q \preceq \qp \preceq \QM$.
  \item There exists an abstract admissible pair $(F,q)$ such that $Q' = Q_{F,q}$.
 \end{enumerate}
\end{corollary}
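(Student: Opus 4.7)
The corollary is essentially an equivalent reformulation of Theorem~\ref{theorem:abstract characterization}, and the plan is to translate between the two formulations.

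For (i)$\Rightarrow$(ii), given a Dirichlet form $Q'$ with $\q \preceq Q' \preceq \QM$, I would take $F := D(Q')$ and $q := Q' - \QM|_F$ on $F$. Theorem~\ref{theorem:abstract characterization} directly supplies that $F$ is an order ideal in $D(\QM)$ and that $q$ is positive and local; since $Q'$ is a Dirichlet form, $F$ is automatically a vector lattice. The hypothesis $\QK = 0$ means $\q = \QM$ on $D(\q)$, so for $f \in D(\q)$ one has $q(f) = Q'(f) - \QM(f) = \q(f) - \q(f) = 0$, giving $D(\q) \subseteq F$ and $q|_{D(\q)} = 0$. Finally $Q_{F,q} = \QM|_F + q = Q'$ is closed.

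For (ii)$\Rightarrow$(i), the main obstacle is to show that $Q' := Q_{F,q}$ is itself a Dirichlet form; once this is established, the three conditions of Theorem~\ref{theorem:abstract characterization}(ii) are all at hand: (a) and (b) are precisely the defining properties of an admissible pair, while (c) follows because $\QK = 0$ implies $\q = \QM$ on $D(\q)$, so $Q'(f) = \QM(f) + q(f) = \q(f)$ for $f \in D(\q)$, and Theorem~\ref{theorem:abstract characterization} then yields the sandwich relation.

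To verify the Dirichlet form property of $Q_{F,q}$, note that closedness is built into the definition of an admissible pair and that density of $D(Q_{F,q}) \supseteq D(\q)$ in $L^2(X,m)$ is inherited from $\q$. For the normal-contraction property I would argue as follows: given $f \in F$, the inequalities $|f_\pm| \leq |f|$, $|f_+ \wedge 1| \leq |f|$, and $|(f_+-1)_+| \leq |f|$, combined with $F$ being an order ideal in $D(\QM)$, ensure that each of these contractions lies in $F$. The Dirichlet property of $\QM$ gives $\QM(f_+\wedge 1) \leq \QM(f)$. For the $q$-part, locality applied to the identity $f_+ f_- = 0$ yields $q(f) = q(f_+) + q(f_-)$, and positivity gives $q(f_-) \geq 0$, hence $q(f) \geq q(f_+)$. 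Decomposing further $f_+ = (f_+ \wedge 1) + (f_+-1)_+$ into nonnegative summands and expanding bilinearly, positivity gives
\begin{equation*}
q(f_+) = q(f_+ \wedge 1) + 2\,q(f_+ \wedge 1,(f_+-1)_+) + q((f_+-1)_+) \geq q(f_+ \wedge 1).
\end{equation*}
Combining these two steps yields $Q_{F,q}(f_+\wedge 1) \leq Q_{F,q}(f)$, which is the desired contraction property. The heart of the argument is therefore this double application of positivity, first in conjunction with locality across $f_+, f_-$ and then through the bilinear expansion at height $1$.
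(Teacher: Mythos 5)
Your proof is correct and follows the same structure as the paper: both directions appeal to Theorem~\ref{theorem:abstract characterization}, and for (ii)$\Rightarrow$(i) the task reduces to the Markov property, obtained by the order-ideal argument for $f_+\wedge 1 \in F$ together with $q(f_+\wedge 1)\leq q(f)$. The only difference is that the paper invokes \cite[Lemma~B.1]{S18} to say local and positive implies monotone and then applies monotonicity directly, whereas you re-derive the needed special case of this implication in-line via the decompositions $f = f_+ - f_-$ and $f_+ = (f_+\wedge 1) + (f_+-1)_+$ together with positivity and locality; this makes the proof self-contained at the cost of a few extra lines.
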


\begin{proof}
 (i) $\Longrightarrow$ (ii): This is a reformulation of the previous theorem. 
 
 (ii) $\Longrightarrow$ (i): Using the previous theorem it suffices to show that $Q_{F,q}$ is a Dirichlet form. Since closedness and density of $D(Q_{F,q}) = F$ are part of the definition of abstract admissible pairs, it suffices to prove the Markov property.   By assumption $F$ is an order ideal in $D(Q^{(M)})$ and for $f \in F$ we have $f_+ \wedge 1 \in D(Q^{(M)})$ and  $|f_+ \wedge 1| \leq |f|$. This shows $f_+ \wedge 1 \in  F$ whenever $f \in F$. Moreover, as already discussed after introducing the killing part, $q$ being local and positive yields that $q$ is monotone, see \cite[Lemma~B.1]{S18}.  These observations and $Q^{(M)}$ being Markovian imply 
  \begin{equation*}
  Q_{F,q}(f_+ \wedge 1) = Q^{(M)}(f_+ \wedge 1) + q(f_+ \wedge 1) \leq Q^{(M)}(f) + q(f) = Q_{F,q}(f).\qedhere
  \end{equation*}
\end{proof}

\begin{remarks}
 This corollary shows that in order to determine all sandwiched forms between $Q$ and $\QM$ we need to characterize all abstract admissible pairs. This is possible when $\QM$ is a regular Dirichlet form on a metric space $\mathcal K$ containing $X$ as a dense open subset. In the next section we will prove that in this case:
 \begin{enumerate}[(a)]
  \item Positive and local forms correspond to measures if their domain contains sufficiently many continuous functions, see Appendix~\ref{section:bilinear_and_measure}. If these forms satisfy $q(f) = 0$ for $f \in D(Q)$, the corresponding measure is supported on the boundary $\mathcal K \setminus X$.
  \item Closed order ideals in $D(\QM)$  correspond to functions vanishing outside an  open set (under some additional density assumption for continuous functions). 
 \end{enumerate}
 This then allows us to identify abstract admissible pairs  with pairs of open subsets of the boundary and certain measures on them.  
\end{remarks}

\section{Domination for parts of regular Dirichlet forms} \label{section:domination regular}

%
%
%
%

Let $Q$ be a regular Dirichlet form on $L^2(X,m)$. Let $O \subseteq X$ be an open set and let $\mu$ be a Radon measure on the Borel $\sigma$-algebra of $O$.  We define the quadratic form $Q^c_{O,\mu}$ by letting $D(Q^c_{O,\mu}) = D(Q) \cap C_c(O)$ and 
$$Q^c_{O,\mu}(f) = Q(f) + \int_O f^2 d \mu.$$
Here $C_c(O)$ is tacitly identified with $\{ \varphi \in C_c(X) \mid {\rm supp}\, \varphi \subseteq O\}$.
\begin{proposition}\label{propostion:characterization admissibility}
 The following assertions are equivalent. 
 \begin{enumerate}[(i)]
  \item $\mu$ charges no sets of $Q$-capacity zero. 
  \item The quadratic form $Q^c_{O,\mu}$ is closable. 
 \end{enumerate}
In this case, the closure $Q_{O,\mu}$ of $Q^c_{O,\mu}$ is given by
\begin{align*}
D(Q_{O,\mu}) &= \{f \in D(Q) \mid \tilde f = 0 \text{ q.e. on } X \setminus O \text{ and } \int_O \tilde f^2 d\mu < \infty\},\\
Q_{O,\mu}(f) &= Q(f) + \int_O \tilde f^2 d\mu.
\end{align*}
\end{proposition}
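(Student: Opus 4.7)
I would establish (i)$\Rightarrow$(ii) by constructing the candidate closure $\bar Q$ from the statement and verifying it is closed, then identifying it with the actual closure via a density argument; the converse (ii)$\Rightarrow$(i) is handled by contradiction, building a form-Cauchy sequence from the polar-set capacity characterization that contradicts closability.

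\textbf{Direction (i)$\Rightarrow$(ii) and identification of the closure.} Define $\bar Q$ on
\[
D(\bar Q)=\Bigl\{f\in D(Q):\tilde f=0\text{ q.e.\ on }X\setminus O,\ \textstyle\int_O\tilde f^2\,d\mu<\infty\Bigr\}
\]
by $\bar Q(f)=Q(f)+\int_O\tilde f^2\,d\mu$. The key technical step is closedness. For a Cauchy sequence $(f_n)\subseteq D(\bar Q)$ with $f_n\to f$ in $L^2(m)$, closedness of $Q$ gives $f\in D(Q)$ with $\|f_n-f\|_Q\to 0$, and the standard subsequence selection for $Q$-Cauchy sequences (cf.~\cite{FOT}) produces quasi-continuous representatives $\tilde f_{n_k}\to\tilde f$ quasi everywhere. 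Assumption (i) upgrades this to $\mu$-a.e.\ convergence on $O$; Cauchyness in $L^2(\mu)$ combined with Fatou's lemma yields $\tilde f\in L^2(O,\mu)$ with $\int_O(\tilde f_{n_k}-\tilde f)^2\,d\mu\to 0$. Quasi-everywhere vanishing on $X\setminus O$ is preserved in the q.e.\ limit, so $f\in D(\bar Q)$ and $\bar Q$ is closed. The inclusion $D(Q)\cap C_c(O)\subseteq D(\bar Q)$ is immediate since continuous functions coincide with their q.c.\ representatives and $\mu$ is Radon, finite on compacta; the two forms agree there. Hence $\bar Q$ is a closed extension of $Q^c_{O,\mu}$, proving closability with $Q_{O,\mu}\subseteq \bar Q$. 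Equality follows from density of $D(Q)\cap C_c(O)$ in $D(\bar Q)$, established by the usual three-step reduction: bounded truncation via normal contractions; multiplication by a cutoff in $D(Q)\cap C_c(O)$ exhausting $O$ to reduce to functions q.e.-supported in a compact subset of $O$; and $Q$-regularity plus dominated convergence to handle the $\mu$-integrals.

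\textbf{Direction (ii)$\Rightarrow$(i); the main obstacle.} I argue by contraposition. Suppose $\mu$ charges some polar Borel set; inner regularity of $\mu$ yields a compact $K\subseteq O$ with $\capa(K)=0$ and $\mu(K)>0$. Using the capacity characterization for compact sets quoted before the definition of polarity, together with $Q$-regularity, construct $\varphi_n\in D(Q)\cap C_c(O)$ with $0\leq\varphi_n\leq 1$, $\varphi_n=1$ on $K$, supports shrinking to $K$, and $\|\varphi_n\|_Q\to 0$; a lattice modification makes $(\varphi_n)$ decreasing. Polar sets being $m$-null (any $u\in D(Q)$ with $u\geq\mathbf{1}_U$ yields $m(U)\leq\|u\|_Q^2$), $\varphi_n\to 0$ in $L^2(m)$; dominated convergence then gives $\varphi_n\to\mathbf{1}_K$ in $L^2(O,\mu)$. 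Monotonicity yields $\int(\varphi_n-\varphi_m)^2\,d\mu\leq\int\varphi_m^2\,d\mu-\int\varphi_n^2\,d\mu\to 0$ for $m\leq n$, so $(\varphi_n)$ is Cauchy in the $Q^c_{O,\mu}$-form norm, yet $Q^c_{O,\mu}(\varphi_n)\to\mu(K)>0$ while $\varphi_n\to 0$ in $L^2(m)$ --- contradicting closability. The principal technical obstacle is producing $\varphi_n$ localized near $K$ inside $O$ while keeping $\|\varphi_n\|_Q\to 0$: naive multiplication by a fixed cutoff $\chi\in D(Q)\cap C_c(O)$ forces the product rule to contribute a persistent $\|\chi\|_Q$ term. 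I would circumvent this by passing to the part of $Q$ on $O$, itself a regular Dirichlet form whose compact-set capacity agrees with $\capa$ on compacta $K\subseteq O$, permitting direct construction of approximants with the required localization.
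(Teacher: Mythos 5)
Your overall architecture is sound, and you correctly identify where the work is: extracting a capacity-minimizing sequence that lives inside $O$. But the way you propose to do it differs from the paper and has two issues worth naming.

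First, the ``lattice modification'' step does not work as stated. If you replace $\varphi_n$ by $\psi_n:=\varphi_1\wedge\cdots\wedge\varphi_n$ to force monotonicity, the Dirichlet-form contraction inequality only gives $Q(\psi_n)\leq\sum_{k\leq n}Q(\varphi_k)$, so $\|\psi_n\|_Q$ need not tend to $0$ even if $\|\varphi_n\|_Q\to 0$; you would lose the very property you built the sequence to have. Fortunately, monotonicity is not actually needed: since $0\leq\varphi_n\leq 1$, $\varphi_n\to\mathbf{1}_K$ pointwise, and $\supp\varphi_n\subseteq G$ for a fixed relatively compact $G\subseteq O$ with $\mu(G)<\infty$, dominated convergence already gives $\varphi_n\to\mathbf{1}_K$ in $L^2(O,\mu)$, hence $L^2(\mu)$-Cauchyness, without any monotonization. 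You should drop that step.

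Second, and more substantively, the claim that the compact-set capacity of the part of $Q$ on $O$ ``agrees with $\capa$ on compacta $K\subseteq O$'' is too strong: in general only the \emph{polar sets} agree (a compact $K\subseteq O$ has $\capa_O(K)=0$ iff $\capa(K)=0$), not the numerical capacities. That weaker fact does suffice for your argument, but it is itself a nontrivial theorem about parts of regular Dirichlet forms, and invoking it quietly shifts the difficulty rather than resolving it. The paper sidesteps this entirely with a different idea: it multiplies the capacity-minimizing sequence $\varphi_n$ by a \emph{fixed} cutoff $\psi\in D(Q)\cap C_c(O)$, accepting that $\|\psi\varphi_n\|_Q$ only stays \emph{bounded} (not small), and then applies Banach--Saks to the resulting sequence inside the closure whose existence is assumed in (ii). The Ces\`aro means $g_N$ converge in the $Q_{O,\mu}$-form norm, still satisfy $g_N\geq 1$ on $K$, and converge to $0$ in $L^2(m)$, so $\mu(K)\leq Q_{O,\mu}(g_N)\to 0$. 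This gets the conclusion directly and self-containedly, with no appeal to the part-form capacity theory. For direction (i)$\implies$(ii), the paper simply cites Stollmann--Voigt; your sketch is in the same spirit, but note that in the final density step (uniform approximation of a function in $D(Q^O)\cap C_c(O)$ by elements of $D(Q)\cap C_c(O)$) you also need convergence in $L^2(\mu)$, which is not automatic from $Q$-form-norm convergence and requires an extra argument that a pure dominated-convergence appeal does not obviously supply.
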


%
%
%
 \begin{proof}
  (i)$\implies$(ii): This follows as in \cite[Theorem~1.2]{Sto92}.
  
  (ii)$\implies$(i): By the inner regularity of the capacity and the inner regularity of the Radon measure $\mu$ it suffices to show for compact sets $K \subseteq O$  that  ${\rm cap}(K) = 0$ implies $\mu(K) = 0$. 
  
  Let now $K \subseteq O$ be compact with ${\rm cap}(K) = 0$. Since $Q$ is regular, there exists a sequence $(\varphi_n)$ in $D(Q) \cap C_c(X)$ such that $\norm{\varphi_n}_Q \to 0$, $0 \leq \varphi_n \leq 1$ and $\varphi_n \geq1 $ on $K$.  Let $G$ be open and relatively compact with $K \subseteq G \subseteq O$. Using regularity of $Q$ again yields the existence of a function $\psi \in D(Q) \cap C_c(X)$ with $0 \leq \psi \leq 1$, $\psi = 1$ on $K$ and ${\rm supp}\, \psi \subseteq G$.
  
  We now consider $f_n := \psi \cdot \varphi_n$. Since $Q$ is a Dirichlet form, it satisfies $f_n \in D(Q)\cap C_c(O) = D(Q_{O,\mu}^c)$ and 
  $$Q(f_n)^{1/2} \leq Q(\psi)^{1/2} + Q(\varphi_n)^{1/2}.$$

  The inequality $0 \leq f_n \leq 1$ and ${\rm supp}\, f_n \subseteq G$ imply $\norm{f_n}  \to  0$ as $n\to \infty$ and 
  $$\int_{O} |f_n|^2 d\mu \leq \mu(G), \quad n\geq 1. $$
  In particular, these estimates show that $(f_n)$ is bounded with respect to the form norm  $\norm{\cdot}_{Q_{O,\mu}}$. Let $Q_{O,\mu}$ be the closure of $Q_{O,\mu}^c$, which exists by (ii). The Banach--Saks theorem implies that for some subsequence $(f_{n_k})$ the sequence of Césaro means
  $$g_N := \frac{1}{N}\sum_{k = 1}^N f_{n_k}$$
  converges to some $g \in D(Q_{O,\mu})$ with respect to $\norm{\cdot}_{Q_{O,\mu}}$. The form norm of $Q_{O,\mu}$ is larger than $\norm{\cdot}$ and hence we obtain  $g_n \to g$ with respect to $\norm{\cdot}$. But since $\norm{f_n} \to 0$, we conclude $g = 0$. By the choice of $(f_n)$ we also have $g_N \in D(Q) \cap C_c(O)$,  $0 \leq g_N \leq 1$ and $g_N \geq 1$ on $K$. Putting everything together we obtain
  \begin{align*}
   \mu(K) \leq \int_O |g_N|^2 d\mu \leq Q_{O,\mu}(g_N) \to 0 \text{ as } N\to \infty.
  \end{align*}
 This yields the desired $\mu(K) = 0$. 
 
The proof of the formula for $Q_{O,\mu}$ follows as in \cite[Proposition~1.1]{SV96}.
 \end{proof}

 \begin{definition}
  A pair $(O,\mu)$ satisfying one of the conditions of the previous theorem is called an {\em admissible pair} for the form $Q$. In this case, we write $Q_{O,\mu}$ for the closure of the form $Q_{O,\mu}^c$ above.
 \end{definition}

 \begin{proposition}\label{proposition:characterization domination}
  Let $(O_i,\mu_i)$, $i = 1,2$, be admissible pairs for $Q$. The following assertions are equivalent: 
  \begin{enumerate}[(i)]
   \item $Q_{O_1,\mu_1} \preceq Q_{O_2,\mu_2}$.
   \item ${\rm cap}(O_1 \setminus O_2) = 0$ and $\mu_2(A) \leq \mu_1(A)$ for every Borel set $A \subseteq O_1 \cap O_2$. 
  \end{enumerate}
   \end{proposition}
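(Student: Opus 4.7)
\medskip

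The plan is to combine Ouhabaz's domination criterion (Proposition~\ref{prop:DirForm_domination}) with the explicit description of $D(Q_{O_i,\mu_i})$ from Proposition~\ref{propostion:characterization admissibility}, and observe that on its domain each form decomposes as the common piece $Q(f)$ plus $\int\tilde f^2\,d\mu_i$; thus once the domain inclusion and order ideal property are handled, the inequality $Q_{O_1,\mu_1}(f,g)\geq Q_{O_2,\mu_2}(f,g)$ for nonnegative $f,g\in D(Q_{O_1,\mu_1})$ reduces to a pointwise-in-$f$ comparison of the two boundary integrals.

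For the implication (ii)$\Rightarrow$(i), I would take $f\in D(Q_{O_1,\mu_1})$. Since $\tilde f=0$ q.e.\ on $X\setminus O_1$ and $\capa(O_1\setminus O_2)=0$, also $\tilde f=0$ q.e.\ on $X\setminus O_2$. Using that $\mu_2$ does not charge polar sets one gets $\tilde f^2=0$ $\mu_2$-a.e.\ outside $O_1\cap O_2$, so $\int\tilde f^2\,d\mu_2=\int_{O_1\cap O_2}\tilde f^2\,d\mu_2\leq\int_{O_1\cap O_2}\tilde f^2\,d\mu_1\leq\int\tilde f^2\,d\mu_1<\infty$, giving both $f\in D(Q_{O_2,\mu_2})$ and $Q_{O_1,\mu_1}(f)\geq Q_{O_2,\mu_2}(f)$. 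For the order ideal property, if $g\in D(Q_{O_2,\mu_2})$ with $|g|\leq|f|$ a.e., then $|\tilde g|\leq|\tilde f|$ q.e., hence $\tilde g=0$ q.e.\ on $X\setminus O_1$ and $\int\tilde g^2\,d\mu_1\leq\int\tilde f^2\,d\mu_1<\infty$, so $g\in D(Q_{O_1,\mu_1})$.

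For (i)$\Rightarrow$(ii), the first task is $\capa(O_1\setminus O_2)=0$. By inner regularity of the capacity it suffices to handle compact $K\subseteq O_1\setminus O_2$. Regularity of $Q$ yields $f\in D(Q)\cap C_c(O_1)$ with $0\leq f\leq 1$ and $f=1$ on $K$; then $f\in D(Q_{O_1,\mu_1})\subseteq D(Q_{O_2,\mu_2})$ by domination, so the quasi-continuous representative $\tilde f=f$ (continuity gives quasi-continuity) must vanish q.e.\ on $X\setminus O_2\supseteq K$, which forces $\capa(K)=0$.

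For the measure inequality, the idea is to approximate indicator functions of compact subsets of $O_1\cap O_2$ by cutoffs. Fix compact $K\subseteq O_1\cap O_2$ and an open relatively compact $G$ with $K\subseteq G\subseteq\overline G\subseteq O_1\cap O_2$. Regularity of $Q$ provides $f\in D(Q)\cap C_c(G)$ with $0\leq f\leq 1$ and $f=1$ on $K$; then $f\in D(Q_{O_1,\mu_1})\cap D(Q_{O_2,\mu_2})$ and the inequality $Q_{O_1,\mu_1}(f)\geq Q_{O_2,\mu_2}(f)$ reduces, after cancelling $Q(f)$, to $\int f^2\,d\mu_1\geq\int f^2\,d\mu_2$, whence
\begin{equation*}
\mu_2(K)\leq\int f^2\,d\mu_2\leq\int f^2\,d\mu_1\leq\mu_1(G).
\end{equation*}
Taking the infimum over such $G$ and invoking outer regularity of the Radon measure $\mu_1$ yields $\mu_2(K)\leq\mu_1(K)$, and inner regularity of $\mu_2$ then extends this to arbitrary Borel $A\subseteq O_1\cap O_2$.

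The main obstacle I anticipate is the measure-theoretic bookkeeping in this final step: one must carefully choose the cutoff $f$ so that $\int f^2\,d\mu_1$ can be squeezed between $\mu_1(K)$ and $\mu_1(G)$, and verify that the outer/inner regularity hypotheses built into the Radon property of $\mu_1,\mu_2$ are enough to pass from compact sets to all Borel subsets of $O_1\cap O_2$ without accidentally losing a $\capa$-null discrepancy hidden in $O_1\setminus O_2$.
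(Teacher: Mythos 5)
Your proof is correct and follows essentially the same route as the paper: both directions combine the domination criterion (Proposition~\ref{prop:DirForm_domination}) with the explicit description of $D(Q_{O_i,\mu_i})$ from Proposition~\ref{propostion:characterization admissibility}, and the measure inequality comes from the identical cutoff-and-regularity squeeze $\mu_2(K)\leq\int\psi^2\,d\mu_2\leq\int\psi^2\,d\mu_1\leq\mu_1(G)$ followed by outer and inner regularity of the Radon measures. The only variation is in proving $\capa(O_1\setminus O_2)=0$: you argue directly that each compact $K\subseteq O_1\setminus O_2$ is polar via a continuous cutoff $f\in D(Q)\cap C_c(O_1)$ with $f=1$ on $K$ (so $\tilde f=f$ must vanish q.e.\ on $K$), whereas the paper argues by contradiction, invoking \cite[Theorem~2.1.5]{FOT} to produce $f\in D(Q)$ with $\tilde f=1$ q.e.\ on a compact set of positive capacity and then multiplying by a cutoff --- your version is slightly more direct and dispenses with that extra citation, but the underlying idea is the same.
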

  \begin{proof}
  (ii)$\implies$(i): This follows  immediately from (ii) and the formula for $Q_{O_i,\mu_i}$ given in Proposition~\ref{propostion:characterization admissibility}.
  
  (i)$\implies$(ii):  Since $D(Q_{O_1,\mu_1})$ is a lattice and an order ideal in $D(Q_{O_2,\mu_2})$, we have $D(Q_{O_1,\mu_1}) \subseteq D(Q_{O_2,\mu_2})$. Hence, every $f \in D(Q_{O_1,\mu_1})$ satisfies $\tilde f = 0$ q.e. on $X \setminus O_2$. 
  
Now, suppose ${\rm cap}(O_1 \setminus O_2) > 0$. By \cite[Theorem~2.1.1]{FOT} there exists a compact set $K \subseteq O_1 \setminus O_2$ with ${\rm cap}(K) > 0$ and by \cite[Theorem~2.1.5]{FOT} there exists $f \in D(Q)$ with $0 \leq f \leq 1$ and $\tilde f = 1$ q.e. on $K$. By the regularity of $Q$ there exists $\varphi \in D(Q) \cap C_c(O_1)$ with $\varphi \geq 1$ on $K$. We obtain $\varphi f \in D(Q_{O_1,\mu_1})$ as $\varphi \tilde f = 0$ q.e. on $X \setminus O_1$ and 
  $$\int_{O_1} |\varphi \tilde f|^2 d\mu_1 \leq \int_{O_1} |\varphi|^2 d\mu_1 < \infty.  $$
  Furthermore, $\varphi \tilde f \geq 1$ q.e. on $K \subseteq X \setminus O_2$. This and ${\rm cap}(K) > 0$ are a contradiction to the fact that functions in  $D(Q_{O_1,\mu_1})$ vanish q.e. on $X \setminus O_2$.
 
  It remains to prove the inequality for the measures. Domination implies 
  $$Q(\varphi) + \int_{O_2} |\varphi|^2 d\mu_2 \leq  Q(\varphi) + \int_{O_1} |\varphi|^2 d\mu_1  $$
  for all nonnegative $\varphi \in  D(Q) \cap C_c(O_1)$.  For any compact set $K \subseteq O_1 \cap O_2$ and any open neighborhood $G$ of $K$ with $G \subseteq O_1 \cap O_2$ there exists $\psi \in C_c(X) \cap D(Q)$ with ${\rm supp} \, \psi \subseteq G$, $0\leq \psi \leq 1$ and $\psi\geq 1$ on $K$. Plugging this into the last inequality yields
  $$\mu_2(K) \leq \int_{O_2} |\psi|^2 d\mu_2 \leq \int_{O_1} |\psi|^2 d\mu_1 \leq \mu_1(G).$$
  Thus we obtain $\mu_2(K) \leq \mu_1(K)$ from the outer regularity of the Radon measure $\mu_1$. By inner regularity of Radon measures this implies the statement for all Borel sets. 
 \end{proof}
%
%

 \section{A boundary for regular Dirichlet forms and a characterization of sandwiched semigroups}\label{sec:boundary_reps}

 \subsection{A boundary for regular Dirichlet forms} Let $Q$ be a regular Dirichlet form on $L^2(X,m)$. In this subsection we introduce a locally compact separable metric space $\mathcal{K}$ that contains $X$ as an open subset and extend $m$ to a Radon measure $\hat m$ on $\mathcal K$ such that $\QM$ can be considered to be a regular form on $L^2(\mathcal{K},\hat m)$.

The spaces $C_c(X)$ and $L^2(X,m)$ are separable because $X$ is locally compact separable metric space and $m$ is a Radon measure. The map
$$L^2(X,m) \to D(\QM), \quad f \mapsto (L^{(M)} + 1)^{-1} f $$
is continuous with respect to the form norm $\norm{\cdot}_{\QM}$ (here $L^{(M)}$ denotes the positive self-adjoint operator associated with $Q^{(M)}$). It has dense image $D(L^{(M)})$ in $D(\QM)$ with respect to $\norm{\cdot}_{\QM}$, showing that $(D(\QM),\norm{\cdot}_{\QM})$ is also separable. Moreover, \cite[Theorem~4.3]{S18}  asserts that for a regular Dirichlet form $\q$ the space $D(\QM) \cap C_b(X)$ is dense in $D(\QM)$ with respect to $\norm{\cdot}_{\QM}$ (this is an abstract version of the Meyers-Serrin theorem). 

Combining these observations yields the existence of a subalgebra $\mathcal{C}$ of $D(\QM)\cap C_b(X)$ with the following three properties: 
\begin{itemize}
\item $\mathcal{C}$ is countably generated.
\item $\mathcal{C}$ is $\norm{\cdot}_{\QM}$-dense in $D(\QM)$.
\item $\mathcal{C}\cap C_c(X)$ is uniformly dense in $C_c(X)$.
\end{itemize}

Let $\mathcal A$ be the uniform closure of $\mathcal C$. Its  complexification $\mathcal{A}^\IC = \{f + ig \mid f,g \in \mathcal A\}$  is a  commutative $C^\ast$-algebra that satisfies $C_0(X;\IC) \subseteq \mathcal A^\IC \subseteq C_b(X;\IC)$. By Gelfand theory there exists a unique (up to homeomorphism) locally compact, separable Hausdorff space $\mathcal{K}$ with the following properties: 
\begin{itemize}
\item $X$ is a dense and open subset of $\mathcal K$.
\item Every $f \in \mathcal A^\IC$ can be   extended to a function $\hat f \in C_0(\mathcal{K};\IC)$ and 
$$C_0(\mathcal{K};\IC) = \{\hat f \mid f \in \mathcal A^\IC\}.$$
\end{itemize}
As $\mathcal C$ is countably generated, the space $\mathcal{K}$ is metrizable. Hence, $\mathcal{K}$ is Polish, that is, separable and completely metrizable, since every locally compact, separable, second countable space is Polish.  Since $X$ is dense in $\mathcal K$,  the continuous extension of a function from $\mathcal A$ to $\mathcal K$ is unique and we will therefore not distinguish between elements of $\mathcal A$ and their extension.  For real-valued functions this interpretation leads to  $\mathcal A = C_0(\mathcal K) (=C_0(\mathcal K;\IR))$.
%

The measure $m$ on $X$ can be extended to a Borel measure $\hat m$ on $\mathcal{K}$ by setting
\begin{align*}
\hat m(A)=m(A\cap X),\quad A\in \mathcal{B}(\K).
\end{align*}
The measure $\hat m$ is again a Radon measure of full support. By this definition the space $L^2(\mathcal{K},\hat m)$ can be naturally identified with $L^2(X,m)$ via the unitary map 
$$R \colon L^2(\mathcal{K},\hat m) \to L^2(X,m),\quad f \mapsto f|_{X}.$$
Our discussion shows $R^{-1} (\mathcal A \cap L^2(X,m))  = C_0(\mathcal K) \cap L^2(\mathcal K,\hat m)$. Since $R$ also preserves the order relation, any Dirichlet form  on $L^2(X,m)$ can be viewed as a Dirichlet form on $L^2(\mathcal{K},\hat m)$ under this transformation. In particular, the form $\QM$ is a regular Dirichlet form on $L^2(\mathcal{K},\hat m)$, see \cite[Theorem~4.4]{S18}.

The following remark sketches the uniqueness of the space $\mathcal K$. We leave details (especially the involved definitions, which can be found in \cite[Appendix~A.4]{FOT}) to the reader. 

\begin{remarks}[Uniqueness of $\mathcal K$] 
The space $\mathcal K$ depends on the choice of the algebra $\mathcal C$. However, given two  algebras $\mathcal C, \mathcal C'$ with the required properties and the corresponding spaces $\mathcal K, \mathcal K'$, there exists a unitary order isomorphism 
$$U \colon L^2(\mathcal K,\hat m) \to L^2(\mathcal K',\hat m')$$
such that $0\leq f_n\leq 1$, $f_n\nearrow 1$ implies $Uf_n\nearrow 1$ and $U$ intertwines $\QM$ and $\QM$ (when considererd as a form on the corresponding space). This implies that both forms are equivalent in the sense of \cite[Appendix~A.4]{FOT}. Since they are also regular, \cite[Theorem~A.4.2]{FOT} yields that $\mathcal K$ and $\mathcal K'$ are quasi-homeomorphic (and establishes further properties of a corresponding quasi-homeomorphism).  
\end{remarks}

In view of the previous remark we  make the following definition. 

\begin{definition}\label{def_boundary}
The set $\partial X=\mathcal{K}\setminus X$ is called the \emph{boundary} of $X$ relative to the form $Q$. 
\end{definition}

\begin{example}[Dirichlet and Neumann Laplacian -- continued] \label{example:boundary} We use the situation and notation of Example~\ref{example:dirichlet integral} and assume that the potential vanishes, i.e., $V = 0$. 

As discussed in Example~\ref{example:dirichlet integral} we have  $(\cE^{(D)}_0)^{(M)} = \cE^{(N)}_0$ so that  $D((\cE^{(D)}_0)^{(M)}) = H^1(\Omega)$ and the standard Sobolev norm on $H^1(\Omega)$ coincides with the form norm of $(\cE^{(D)}_0)^{(M)}$. If $\Omega \subseteq \R^n$ has continuous boundary (for a precise definition see e.g. \cite[Definition~4.1]{EE}), the space $\{f|_\Omega \mid f \in C_c^\infty(\R^n)\}$ is dense in $ H^1(\Omega)$ with respect to the standard Sobolev norm. Hence, in this case we can choose the algebra $\mathcal C$ to be a subset of $\{f|_{\Omega} \mid f \in C_c^\infty(\R^n)\} \subseteq C_c(\overline{\Omega})$. Since by Stone-Weierstra\ss\, $\{f|_\Omega \mid f \in C_c^\infty(\R^n)\}$ is dense in $C_0(\overline{\Omega})$, we can further assume that $\mathcal C$ is dense in  $C_0(\overline{\Omega})$, so that the algebra $\mathcal A$, the uniform closure of $\mathcal C$, equals $C_0(\overline{\Omega})$. Our construction then yields $\mathcal K =  \overline{\Omega}$ (up to homeomorphism) and that the boundary of $\Omega$ relative to $\cE_0^{(D)}$ coincides with the metric boundary $\partial \Omega = \overline{\Omega} \setminus \Omega$ in $\R^n$.
\end{example}
\begin{example}[Fractional Laplacian -- continued] \label{example:boundary fractional}
We use the situation and notation of Example~\ref{example:fractional}. As discussed above we have $D((Q^{s,(D)})^{(M)}) = W^s(\Omega)$.   Moreover, if $\Omega$ has Lipschitz boundary, then $\{f|_{\Omega} \mid f \in C_c^\infty(\R^n)\}$ is dense in $W^s(\Omega)$ with respect to the form norm of $Q^{s,(N)}$, which coincides with the ususal norm on $W^s(\Omega)$, see \cite[Corollary~5.5]{NPV}. With this at hand the same argument as in  the previous example yields that we can choose $\mathcal K = \overline{\Omega}$ such that the boundary of $\Omega$ relative to $Q^{s,(D)}$ coincides with the metric boundary $\partial \Omega = \overline{\Omega} \setminus \Omega$ in $\R^n$.
\end{example}

 \subsection{A characterization of sandwiched semigroups for regular Dirichlet forms}
 
 In this subsection we prove the main result of this paper. Let $Q$ be a regular Dirichlet form on $L^2(X,m)$. We apply the theory developed in Section~\ref{section:domination regular} to the form $\QM$ when considered as a regular Dirichlet form on $L^2(\mathcal K,\hat m)$. We start with a simple observation that follows from the previous discussion.
 
 \begin{proposition}\label{proposition:identities}
  Let $Q$ be a regular Dirichlet form. Then $(\mathcal K,0)$ and $(X,0)$ are admissible pairs for the regular Dirichlet form $\QM$ on $L^2(\mathcal K,\hat m)$ and we have  $\QM = (\QM)_{\mathcal K,\,0}$ and $Q = (\QM)_{X,\,0}$. In particular,  
  $$D(Q) = \{f \in D(\QM) \mid \tilde f = 0 \text{ q.e. on } \partial X\}.$$
 \end{proposition}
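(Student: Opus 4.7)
The plan is to verify admissibility of both pairs, then identify the two closures with the claimed Dirichlet forms, and finally read off the description of $D(Q)$ from Proposition~\ref{propostion:characterization admissibility}. Admissibility of $(\mathcal K,0)$ and $(X,0)$ is immediate, since the zero measure charges no set whatsoever and in particular no set of $\QM$-capacity zero.

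For the identity $\QM=(\QM)_{\mathcal K,\,0}$, I would note that $(\QM)^c_{\mathcal K,\,0}$ is, by definition, the restriction of $\QM$ to $D(\QM)\cap C_c(\mathcal K)$. Since $\QM$, viewed as a Dirichlet form on $L^2(\mathcal K,\hat m)$, is regular by the discussion preceding the proposition (cf.\ \cite[Theorem~4.4]{S18}), this subspace is $\norm{\cdot}_{\QM}$-dense in $D(\QM)$, and so its closure is $\QM$ itself.

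The identity $Q=(\QM)_{X,\,0}$ is the main point. I would first establish
\begin{equation*}
D(Q)\cap C_c(X)=D(\QM)\cap C_c(X).
\end{equation*}
The inclusion ``$\subseteq$'' is trivial, as $\QM$ extends $Q$ (recall $\QK=0$). For ``$\supseteq$'', given $g\in D(\QM)\cap C_c(X)$, regularity of $Q$ on $X$ produces an $h\in D(Q)\cap C_c(X)$ with $h\ge\abs{g}$ (take $h=\norm{g}_\infty\psi$ for a suitable cutoff $\psi\in D(Q)\cap C_c(X)$ equal to $1$ on $\supp g$); since $D(Q)$ is an order ideal in $D(\QM)$ by Proposition~\ref{prop:DirForm_domination}, this forces $g\in D(Q)$. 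With this equality, $(\QM)^c_{X,\,0}$ coincides with the restriction of $Q$ to $D(Q)\cap C_c(X)$, which is $\norm{\cdot}_Q$-dense in $D(Q)$ by the regularity of $Q$. Since $\norm{\cdot}_Q=\norm{\cdot}_{\QM}$ on $D(Q)$ and $Q$ is closed, the closure of this restriction is precisely $Q$.

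Finally, the displayed description of $D(Q)$ follows by applying the closure formula of Proposition~\ref{propostion:characterization admissibility} to the admissible pair $(X,0)$, using $\mathcal K\setminus X=\partial X$ and observing that the integrability condition is vacuous for the zero measure. The only nontrivial ingredient is the order-ideal argument forcing $D(\QM)\cap C_c(X)\subseteq D(Q)$; everything else reduces to the regularity of $Q$ and $\QM$ together with $\QK=0$.
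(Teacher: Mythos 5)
Your argument is correct, and since the paper presents the proposition as ``a simple observation that follows from the previous discussion'' without a written proof, your reconstruction supplies exactly the kind of reasoning the authors left implicit. All the individual steps check out: admissibility is trivial for the zero measure; the identity $\QM=(\QM)_{\mathcal K,0}$ is immediate from regularity of $\QM$ on $L^2(\mathcal K,\hat m)$; the equality $D(Q)\cap C_c(X)=D(\QM)\cap C_c(X)$ via the order-ideal property from $Q\preceq\QM$ and a cutoff supplied by regularity is the key step; and the displayed formula for $D(Q)$ follows from Proposition~\ref{propostion:characterization admissibility}.

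One point worth making explicit: the hypothesis $\QK=0$, which you invoke (``recall $\QK=0$'') to conclude $\QM|_{D(Q)}=Q$ and to justify $\norm{\cdot}_Q=\norm{\cdot}_{\QM}$ on $D(Q)$, is not stated in the proposition as written. It is genuinely needed for the identity $Q=(\QM)_{X,0}$: if $\QK\neq 0$, then $(\QM)^c_{X,0}$ restricted to $D(Q)\cap C_c(X)$ equals $Q-\QK\neq Q$, so the closure cannot be $Q$ (compare Example~\ref{example:dirichlet integral} with $V\neq 0$). You were right to flag and use it; the paper apparently takes it for granted from the surrounding context (the subsection is building toward Theorem~\ref{theorem:main}, which does assume $\QK=0$).
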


 The following is the main result of the paper. 
 
 \begin{theorem}\label{theorem:main}
  Let $\q$ be a regular Dirichlet form with $\QK = 0$. For a Dirichlet form $Q'$ the following assertions are equivalent:
  \begin{enumerate}[(i)]
   \item There exists an admissible pair $(O,\mu)$ for $\QM$ with $X \subseteq O$ and $\mu(X) = 0$ such that $Q' = (Q^{(M)})_{O,\mu}.$  
   \item \begin{enumerate}[(a)]
          \item $\q \preceq \qp \preceq \QM$
          \item $D(Q') \cap C_c(\mathcal K)$ is dense in $D(Q')$ with respect to $\norm{\cdot}_{Q'}$.
         \end{enumerate}
  \end{enumerate}
 \end{theorem}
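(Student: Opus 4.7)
For the easy direction (i)$\Rightarrow$(ii), I would apply Proposition~\ref{proposition:characterization domination} twice, comparing $(O,\mu)$ to the admissible pairs $(X,0)$ and $(\mathcal{K},0)$ from Proposition~\ref{proposition:identities}. Since $X\subseteq O$ and $\mu(X)=0$, the condition on capacities and the pointwise measure inequality are immediate in both directions, giving $Q\preceq Q'\preceq Q^{(M)}$. The density condition (b) is built into the construction since $D(Q^{(M)})\cap C_c(O)$ is by definition a core for $(Q^{(M)})_{O,\mu}$ and embeds into $D(Q')\cap C_c(\mathcal{K})$.

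For the harder direction (ii)$\Rightarrow$(i), the strategy is to read off the pair $(O,\mu)$ from the abstract admissible pair $(F,q)$ associated with $Q'$ via the corollary to Theorem~\ref{theorem:abstract characterization}. Here $F=D(Q')$ is a closed order ideal in $D(Q^{(M)})$, the form $q := Q'-Q^{(M)}|_F$ is positive and local, and $q$ vanishes on $D(Q)$. First I would restrict attention to $A:=D(Q')\cap C_c(\mathcal{K})$, which is a form core by (b) and, being an order ideal intersected with $C_c(\mathcal{K})$, is a sublattice closed under pointwise multiplication by elements of $C_c(\mathcal{K})$ after the usual truncation arguments. To extract $O$, I would set
\begin{equation*}
O := \bigcup\{U\subseteq\mathcal{K} \text{ open} : \text{for every } f\in D(Q^{(M)})\cap C_c(U) \text{ one has } f\in D(Q')\},
\end{equation*}
and verify, using the order-ideal property of $D(Q')$ in $D(Q^{(M)})$ together with the partition-of-unity machinery available in $D(Q^{(M)})\cap C_c(\mathcal{K})$, that $A = D(Q^{(M)})\cap C_c(O)$. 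Since $Q\preceq Q'$ and $Q$ is regular on $X$, every $f\in D(Q)\cap C_c(X)$ lies in $A$, which forces $X\subseteq O$.

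To produce the measure, I would apply the representation result promised in Appendix~\ref{section:bilinear_and_measure}: the restriction of $q$ to $A$ is a positive, local, monotone quadratic form whose domain contains enough compactly supported continuous functions, so it arises from a (unique) Radon measure $\mu$ on $O$ via $q(f)=\int_O f^2\,d\mu$. Two things must then be checked. First, $\mu(X)=0$: since $q$ vanishes on $D(Q)\cap C_c(X)$ (because $Q^{(K)}=0$ and $Q'$ extends $Q$, cf.\ part (c) of Theorem~\ref{theorem:abstract characterization}), an inner-regularity argument forces $\mu$ to vanish on every compact subset of $X\cap O=X$. Second, admissibility, i.e.\ $\mu$ charges no $Q^{(M)}$-polar set: since the form $Q^{(M)}|_A+\int f^2\,d\mu$ coincides with $Q'|_A$ and is therefore closable with closure $Q'$, Proposition~\ref{propostion:characterization admissibility} gives admissibility, and the closure equals $(Q^{(M)})_{O,\mu}$. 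Matching cores then yields $Q'=(Q^{(M)})_{O,\mu}$.

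The main obstacle I foresee is the correct identification $A=D(Q^{(M)})\cap C_c(O)$, i.e.\ showing that the open set $O$ defined above genuinely captures all compactly supported continuous functions in $D(Q')$ and no more. The inclusion $A\supseteq D(Q^{(M)})\cap C_c(O)$ requires a partition of unity subordinate to a cover of $\supp f$ by sets $U$ entering the definition of $O$, combined with the algebra property of $D(Q^{(M)})\cap L^\infty$ and the order-ideal structure of $D(Q')$; the reverse inclusion is almost tautological. Once this is in place, applying the appendix to obtain $\mu$ and verifying $\mu(X)=0$ and admissibility are relatively routine consequences of what has already been assembled.
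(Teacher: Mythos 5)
Your overall outline is close to the paper's strategy, and the easy direction (i)$\Rightarrow$(ii) is handled the same way. But the crux of (ii)$\Rightarrow$(i) has a genuine gap: the claimed identity $A = D(Q^{(M)})\cap C_c(O)$, with $A=D(Q')\cap C_c(\mathcal K)$, is false in general, and the direction you call ``almost tautological'' is precisely the one that breaks down. For $f\in A$ the order-ideal argument you sketch only yields that the \emph{open set} $\{f\neq 0\}$ is covered by your ``good'' sets $U$, hence $\{f\neq 0\}\subseteq O$; but $\supp f = \overline{\{f\neq 0\}}$ may meet $\mathcal K\setminus O$, so $f\notin C_c(O)$. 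Concretely, take $X=(0,1)$, $\mathcal K=[0,1]$, $Q$ the $H^1_0$ form, $Q^{(M)}$ the $H^1$ form, and $Q'=Q^{(M)}$ restricted to $\{u\in H^1: u(0)=0\}$; then $O=(0,1]$, and any $f\in H^1\cap C[0,1]$ with $f(0)=0$ but $f>0$ on a sequence tending to $0$ lies in $A$ while $0\in\supp f\not\subseteq O$. Because of this, you also cannot apply Corollary~\ref{coro:representation theorem} to $q|_A$ with ambient space $O$: the domain is not contained in $C_c(O)$.

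The correct statement, and the one the paper proves, is $D(Q')\cap C_c(O) = D(Q^{(M)})\cap C_c(O)$, obtained by showing $D(Q')\cap C_c(O)$ is uniformly dense in $C_c(O)$ and then multiplying an arbitrary $\varphi\in D(Q^{(M)})\cap C_c(O)$ by a cut-off $\psi\in D(Q')\cap C_c(O)$ that equals $1$ on $\supp\varphi$ and invoking the algebraic-ideal property. Passing from the hypothesis that $A$ is a form core to the needed fact that $D(Q')\cap C_c(O)$ is a form core requires a further truncation argument (replace $f\in A$ by $(f^+ - \epsilon)^+ - (f^- - \epsilon)^+$, whose support is compactly contained in $\{f\neq 0\}\subseteq O$ and which converges to $f$ in $\norm{\cdot}_{Q'}$); your proposal does not account for this. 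Finally, your definition of $O$ as a union of good open sets does ultimately coincide with the paper's, but the paper's route --- identifying the uniform closure $\mathcal D = \overline{A}^{\norm{\cdot}_\infty}$ as a closed ideal of $C_0(\mathcal K)$ and reading off $O$ from the classification of such ideals --- gives the structure of $\mathcal D$ for free and avoids having to separately verify that the good sets are closed under unions; with your definition you would still have to produce the cut-off argument to show $O$ itself is ``good,'' so nothing is saved. In short, the skeleton is right, but the identification of the core with $D(Q^{(M)})\cap C_c(O)$ and the measure-representation step both need to be routed through $D(Q')\cap C_c(O)$, not through $A$, and the ``tautological'' inclusion must be replaced by the cut-off and truncation arguments just described.
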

\begin{proof}

 (i)$\implies$(ii): (a) follows from Proposition~\ref{proposition:characterization domination} and the identities discussed in Propostion~\ref{proposition:identities}. The density of $ D((Q^{(M)})_{O,\mu}) \cap C_c(O) $ in $D((Q^{(M)})_{O,\mu})$ with respect to $\norm{\cdot}_{(Q^{(M)})_{O,\mu}}$ is part of the definition of the form $(Q^{(M)})_{O,\mu}$.
 
 (ii)$\implies$(i): Let $\mathcal D$ be the uniform closure of the algebra $D(Q') \cap C_c(\mathcal K)$. Since $D(Q') \cap C_c(\mathcal K)$ is an algebraic ideal in $D(\QM) \cap C_c(\mathcal K)$ (here we use domination and Proposition~\ref{proposition:characterization domination}), $\mathcal D$ is a uniformly closed ideal in
 $$\overline{D(\QM) \cap C_c(\mathcal K)}^{\norm{\cdot}_\infty} = C_0(\mathcal K)$$
 (here we use the regularity of $\QM$). Moreover, by Theorem~\ref{theorem:abstract characterization} we have $D(Q) \subseteq D(Q')$ so that $D(Q) \cap C_c(X) \subseteq D(Q') \cap C_c(\mathcal K)$. Since $Q$ is regular on $L^2(X,m)$, this yields $C_0(X) \subseteq \mathcal D$. By the characterization of closed ideals in $C_0(\mathcal K)$ there exists an open set $X \subseteq O \subseteq \mathcal K$ such that 
 $$\mathcal D = \{f \in C_0(\mathcal K) \mid f = 0 \text{ on } \mathcal K \setminus O\}.$$
Altogether this discussion shows that $D(Q') \cap C_c(O)$ is $\norm{\cdot}_{Q'}$ dense in $D(Q')$ and uniformly dense in $C_c(O)$.

Next, we show $D(Q') \cap C_c(O) = D(\QM) \cap C_c(O)$. Let $\varphi \in D(\QM) \cap C_c(O)$ and let $K = {\rm supp}\, \varphi \subseteq O$. Since $Q'$ is a Dirichlet form and $D(Q') \cap C_c(O)$ is uniformly dense in $C_c(O)$, there exists $\psi \in D(Q') \cap C_c(O)$ with $\psi = 1$ on $K$. We obtain $\varphi = \psi \varphi \in D(Q')\cap C_c(O)$ since $D(Q')\cap C_c(O)$ is an algebraic ideal in $D(\QM) \cap C_c(O)$ (here we use domination and Proposition~\ref{proposition:characterization domination}).

According to Theorem~\ref{theorem:abstract characterization} the domination (a) implies that the form $q = Q' - \QM$ with domain $D(q) = D(\QM) \cap C_c(O)$ is positive, local and satisfies $q(f) = 0$ for all $f \in D(Q) \cap C_c(X)$. By Corollary~\ref{coro:representation theorem} there exists a Radon measure $\mu$ on $O$ such that 
$$q(f) = \int_{O} |f|^2 d\mu, \quad f \in D(\QM) \cap C_c(O).$$
Since $D(Q) \cap C_c(X)$ is uniformly dense in $C_c(X)$, the property $q(f) = 0$ for $f \in D(Q)\cap C_c(X)$ implies $\mu(X) = 0$.

For $f \in D(\QM) \cap C_c(O)$,  we have by definition of $q$ that
$$Q'(f) = \QM(f) + \int_O |f|^2 d\mu = (\QM)^c_{O,\mu}(f).  $$
Since $Q'$ is closed and $D(\QM) \cap C_c(O)$ is $\norm{\cdot}_{Q'}$-dense in $D(Q')$, this implies that $(O,\mu)$ is an admissible pair for $\QM$ and $Q' = (\QM)_{O,\mu}$. 
%
%
%
%
\end{proof}

We can reformulate this theorem as follows. 

\begin{corollary}
  Let $\q$ be a regular Dirichlet form with $\QK = 0$. For a Dirichlet form $Q'$, the following assertions are equivalent. 
  \begin{enumerate}[(i)]
   \item There exists an open subset $\partial_\mu X \subseteq \partial X$ and a Radon measure $\mu$ on $\partial_\mu X$ that does not charge sets of $\QM$-capacity zero such that 
   $$D(Q') = \{f \in D(\QM) \mid \tilde f = 0 \text{ q.e. on } \partial X \setminus \partial_\mu X \text{ and } \int_{\partial_\mu X} |\tilde f|^2 d\mu < \infty\}$$
   and 
   $$Q'(f) = \QM(f) + \int_{\partial_\mu X} |\tilde f|^2 d\mu.$$
   \item \begin{enumerate}[(a)]
          \item $\q \preceq \qp \preceq \QM$
          \item $D(Q') \cap C_c(\mathcal K)$ is dense in $D(Q')$ with respect to $\norm{\cdot}_{Q'}$.
         \end{enumerate}
  \end{enumerate}
  
\end{corollary}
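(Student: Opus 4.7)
The plan is to derive this corollary directly from Theorem~\ref{theorem:main} by substituting the explicit description of the closure $(Q^{(M)})_{O,\mu}$ provided by Proposition~\ref{propostion:characterization admissibility}. Both statements describe the same class of sandwiched Dirichlet forms; they only differ in how the parametrizing data is presented. The theorem uses an admissible pair $(O,\mu)$ for $Q^{(M)}$ with $X\subseteq O\subseteq\mathcal{K}$ open and $\mu(X)=0$, while the corollary uses a pair $(\partial_\mu X,\mu)$ with $\partial_\mu X\subseteq\partial X$ relatively open. The heart of the proof is therefore a bijection between these two parametrizations.

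First I would set up that bijection. Starting from $(O,\mu)$ as in Theorem~\ref{theorem:main}, I set $\partial_\mu X:=O\cap\partial X$, which is relatively open in $\partial X$ because $X$ is open in $\mathcal{K}$; since $\mu(X)=0$, the measure $\mu$ restricts to a Radon measure on $\partial_\mu X$ carrying all its mass. Conversely, given an open $\partial_\mu X\subseteq\partial X$, the complement $C:=\partial X\setminus\partial_\mu X$ is closed in $\partial X$, hence closed in $\mathcal{K}$ because $\partial X=\mathcal{K}\setminus X$ is itself closed; then $O:=\mathcal{K}\setminus C$ is open in $\mathcal{K}$, contains $X$, and satisfies $O\cap\partial X=\partial_\mu X$. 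A Radon measure on $\partial_\mu X$ extends canonically to a Radon measure on $O$ by setting $\mu(X)=0$, since $\partial_\mu X$ is closed in $O$, and the capacity-zero condition (i.e.\ the admissibility of $(O,\mu)$) is manifestly invariant under this correspondence.

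Next I would apply Proposition~\ref{propostion:characterization admissibility} to $Q^{(M)}$, which is a regular Dirichlet form on $L^2(\mathcal{K},\hat m)$ by \cite[Theorem~4.4]{S18}. The proposition yields
\begin{align*}
D((Q^{(M)})_{O,\mu}) &= \{f\in D(Q^{(M)}) \mid \tilde f = 0\text{ q.e.\ on }\mathcal{K}\setminus O \text{ and } \textstyle\int_O \tilde f^2\,d\mu<\infty\},\\
(Q^{(M)})_{O,\mu}(f) &= Q^{(M)}(f) + \int_O \tilde f^2\,d\mu.
\end{align*}
Because $X\subseteq O$, we have $\mathcal{K}\setminus O=\partial X\setminus\partial_\mu X$, and because $\mu(X)=0$, integration over $O$ coincides with integration over $\partial_\mu X$. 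The formulas therefore rewrite as exactly those in part (i) of the corollary. Condition (ii) of the corollary is literally identical to that in Theorem~\ref{theorem:main}, so the theorem transports the equivalence verbatim.

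The argument presents no real obstacle; the main thing to watch is the topological bookkeeping, namely verifying that the parametrizations match up correctly and that the capacity-zero condition survives the translation. Both points rest on the fact that $\partial X$ is closed in $\mathcal{K}$, so that every relatively open subset of $\partial X$ sits as a closed subset of its associated open neighborhood of $X$ in $\mathcal{K}$, making the extension-by-zero and restriction operations between Radon measures on $\partial_\mu X$ and on $O$ entirely routine.
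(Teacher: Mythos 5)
Your proposal is correct and matches what the paper intends (the paper offers no proof, simply calling the corollary a reformulation of Theorem~\ref{theorem:main}). The bijection you set up between admissible pairs $(O,\mu)$ for $Q^{(M)}$ with $X\subseteq O$ open and $\mu(X)=0$, and pairs $(\partial_\mu X,\mu)$ with $\partial_\mu X\subseteq\partial X$ relatively open, is exactly the right bookkeeping: since $\partial X$ is closed in $\mathcal K$, the map $O\mapsto O\cap\partial X$ and the map $\partial_\mu X\mapsto \mathcal K\setminus(\partial X\setminus\partial_\mu X)$ are mutually inverse between the two families of open sets, the restriction and zero-extension of measures are mutually inverse between the corresponding families of Radon measures, and the capacity-zero condition is preserved because $\mu$ in either description carries all its mass on $\partial_\mu X$. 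Substituting into the explicit formula for the closure $(Q^{(M)})_{O,\mu}$ from Proposition~\ref{propostion:characterization admissibility} then yields the formulas in part~(i) verbatim, as you observe.
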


As an application of this result and our examples we obtain one of the main results of \cite{AW03} under slightly less restrictive assumptions.

\begin{example}\label{exa:arendt warma}
 Again we use the situation of Schrödinger operators on $\Omega$ of Example~\ref{example:dirichlet integral} with $V = 0$.  Assume further that $\Omega \subseteq \R^n$ has  continuous boundary and let $Q$ be a Dirichlet form on $L^2(\Omega)$ with associated Markovian semigroup $(S_t)$.   Let  $\partial \Omega = \overline{\Omega} \setminus \Omega$ be the metric boundary of $\Omega$. The disscusion in Example~\ref{example:dirichlet integral} and Example~\ref{example:boundary} combined with the previous corollary yield that the following assertions are equivalent.
 \begin{enumerate}[(i)]
 \item There exists an open subset $\partial_\mu \Omega \subseteq \partial \Omega$ and a Radon measure $\mu$ on $\partial_\mu \Omega$ that does not charge sets of $\mathcal E_0^{(N)}$-capacity zero such that 
   $$D(Q) = \{f \in H^1(\Omega) \mid \tilde f = 0 \text{ q.e. on } \partial \Omega \setminus\partial_\mu \Omega \text{ and } \int_{\partial_\mu \Omega} |\tilde f|^2 d\mu < \infty\}$$
   and 
   $$Q(f) = \int_\Omega |\nabla f|^2 dx + \int_{\partial_\mu \Omega} |\tilde f|^2 d\mu.$$
   \item \begin{enumerate}[(a)]
          \item $(e^{t\Delta^{(D)}}) \preceq (S_t) \preceq (e^{t\Delta^{(N)}})$
          \item $D(Q) \cap C_c(\overline{\Omega})$ is dense in $D(Q)$ with respect to $\norm{\cdot}_{Q}$. \end{enumerate}
 \end{enumerate}
This is precisely the statement of \cite[Theorem~4.1]{AW03} under the slightly less restrictive assumption of $\Omega$ having continuous boundary instead of Lipschitz boundary. 
\end{example}

\begin{example}\label{exa:arendt warma - fractional}
 We use the situation of fractional Laplacians of Example~\ref{example:fractional}.  Assume further that $\Omega \subseteq \R^n$ has Lipschitz boundary and let $Q$ be a Dirichlet form on $L^2(\Omega)$ with associated Markovian semigroup $(S_t)$.   Let  $\partial \Omega = \overline{\Omega} \setminus \Omega$ be the metric boundary of $\Omega$. The disscusion in Example~\ref{example:fractional} and Example~\ref{example:boundary fractional} combined with the previous corollary yield that the following assertions are equivalent.
 \begin{enumerate}[(i)]
 \item There exists an open subset $\partial_\mu \Omega \subseteq \partial \Omega$ and a Radon measure $\mu$ on $\partial_\mu \Omega$ that does not charge sets of $Q^{s,(N)}$-capacity zero such that 
   $$D(Q) = \{f \in W^s(\Omega) \mid \tilde f = 0 \text{ q.e. on } \partial \Omega \setminus\partial_\mu \Omega \text{ and } \int_{\partial_\mu \Omega} |\tilde f|^2 d\mu < \infty\}$$
   and 
   $$Q(f) = \frac{1}{2} \int_{\Omega \times \Omega} \frac{|f(x) - f(y)|^2}{|x-y|^{n + 2s}} dx\,dy + \int_{\partial_\mu \Omega} |\tilde f|^2 d\mu.$$
   \item \begin{enumerate}[(a)]
          \item $(e^{- t H_s^{(D)}}) \preceq (S_t) \preceq (e^{- t H_s^{(N)}})$
          \item $D(Q) \cap C_c(\overline{\Omega})$ is dense in $D(Q)$ with respect to $\norm{\cdot}_{Q}$. \end{enumerate}
 \end{enumerate}
The implication (i)$\implies$(ii) was also proved for Dirichlet forms associated with a related, but different fractional Laplacian by Claus and Warma \cite[Theorem~4.2]{CW20}.
\end{example}

As mentioned in the introducion we wanted to provide a version of the results of \cite{AW03} for general Dirichlet forms. In the abstract framework we were as general as possible but held back in generality for regular Dirichlet forms. In the following remarks we collect what else can be deduced from our general framework (at the cost of brevity and technical simplicity). 

\begin{remarks}
 \begin{enumerate}[(a)]
  \item  Another result of \cite{AW03} is the descritption of the operators corresponding to semigroups  $(e^{t\Delta^{(D)}}) \preceq (S_t) \preceq (e^{t\Delta^{(N)}})$ as Laplacians with Robin type boundary conditions. Something similar is possible here after equipping the abstract boundary $\partial X$ with so-called harmonic measures. This allows for the definition of densities of normal derivatives and leads to abstract Robin boundary conditions. In the Euclidean setting with $\Omega$ having Lipschitz boundary the harmonic measures  are mutually absolutely continuous with respect to the  surface measure on $\partial \Omega$ and the abstract normal derivatives are given by the usual normal derivative.   
  
  \item In Theorem~\ref{theorem:main} we  used that $D(Q') \cap C_c(\mathcal K)$ is dense in $D(Q')$ because we   constructed the set $O$ as the complement of the zero set of the closed ideal $$\overline{D(Q') \cap C_c(\mathcal K)}^{\norm{\cdot}_\infty}$$  in $C_0(\mathcal K)$. One can drop the density assumption and replace this argument by  the characterization of closed ideals in regular Dirichlet spaces given in \cite{St93}. In this case, Theorem~\ref{theorem:main} remains true without assertion (ii)(b) but with $O$ open  replaced by $O$ quasi-open. 
  \item We always assumed that the killing part vanishes. If $\QK\neq 0$, then there are two possible choices of reference for the maximal form:
  \begin{enumerate}[(1)]
   \item One can characterize all Dirichlet forms $Q'$ with $Q \preceq Q' \preceq \QM$ via abstract admissible pairs. Since in this case $\QM$ is not an extension of $Q$, the form  $q$ of the abstract admissible pair corresponding to $Q'$ does not vanish on $D(Q)$ but is bounded above by $\QK$. In the regular setting this implies that the measure $\mu$ from the admissible pair corresponding to $Q'$ is not necessarily supported only on $\partial X$.  It  satisfies $\mu \leq k$ on $X$, where $k$ is the measure corresponding to the local and positive form $\QK$ (cf. Appendix~\ref{section:bilinear_and_measure}). 
   \item Instead of comparing $Q'$ with $\QM$ one can characterize  $Q \preceq Q' \preceq Q^{\rm ref}$, where $Q^{\rm ref}$ is the active reflected Dirichlet form of $Q$. It arises by adding a suitable extension of $\QK$ to $\QM$, cf. \cite[Section~3.3]{S18}. In this case, our main theorems  still hold true but the proofs become substantially longer. 
  \end{enumerate}
 \end{enumerate}

\end{remarks}

\appendix
\section{Bilinear forms on $C_c(X)$}\label{section:bilinear_and_measure}

Let $X$ be a locally compact metric space. In this section we provide a characterization of positive and local forms defined on $C_c(X)$. First we show that densely defined positive forms on $C_c(X)$ can be extended to the whole of $C_c(X)$ if their domain is a lattice. In a second step we prove a representation theorem. Certainly both results are well-known to experts. Since we could not find a proper reference, we include the proofs for the convenience of the reader.  

%

In the following lemma we write $C(K)$ for the subspace $\{f \in C_c(X) \mid \supp f\subseteq K\}$.

\begin{lemma}\label{lemma:cont_ext_induct}
Let $q$ be a  densely  defined (with respect to the uniform norm) quadratic form on $D(q)\subseteq C_c(X)$. Suppose $q$ is positive and $D(q)$ is a lattice. 
\begin{enumerate}[(a)]
 \item For any compact $K \subseteq X$ the restriction of $q$ to $D(q) \cap C(K)$ is continuous. 
 \item $q$ can be uniquely extended to a positive quadratic form on $C_c(X)$.
\end{enumerate}
\end{lemma}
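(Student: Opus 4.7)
My plan is as follows.

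For part (a), the key observation is that positivity together with the lattice structure of $D(q)$ implies \emph{monotonicity}: if $f,g \in D(q)$ with $0 \le f \le g$, then $g-f \ge 0$ lies in $D(q)$, and expanding $q(g) = q(f) + 2q(f, g-f) + q(g-f)$ yields $q(f) \le q(g)$ from $q(f,g-f) \ge 0$ and $q(g-f) \ge 0$. Given a compact $K \subseteq X$, I would use the uniform density of $D(q)$ in $C_c(X)$ together with the lattice (apply $(\,\cdot\,)_+$) to construct a nonnegative $\varphi \in D(q)$ with $\varphi \ge 1$ on $K$. For any $f \in D(q) \cap C(K)$ the decomposition $|f| = f_+ + f_-$ lies in $D(q)$, satisfies $|f| \le \norm{f}_\infty \varphi$ pointwise, and hence $q(|f|) \le \norm{f}_\infty^2 q(\varphi)$ by monotonicity. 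Positivity of $q(f_+, f_-)$ gives $q(f) \le q(|f|)$, and nonnegativity of $q$ then yields $0 \le q(f) \le \norm{f}_\infty^2 q(\varphi)$, i.e.~continuity at $0$. Continuity at a general $f$ follows from $q(f_n) - q(f) = 2q(f,f_n-f) + q(f_n-f)$ combined with Cauchy--Schwarz $|q(f, f_n-f)| \le q(f)^{1/2} q(f_n-f)^{1/2}$.

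For part (b), uniqueness is immediate from part (a): any positive extension $\tilde q$ to $C_c(X)$ is continuous on each $C(K)$ by the same argument applied to $\tilde q$, so $\tilde q(f)$ is forced to be the limit of $q(f_n)$ for any sequence $f_n \in D(q)$ approximating $f$ uniformly with support in a common compact set. For existence, the core step is to produce, for each $f \in C_c(X)$, such a controlled-support approximating sequence; part (a) then makes $q(f_n)$ Cauchy and we define $q(f) = \lim q(f_n)$. Bilinearity and positivity on nonnegative pairs pass to the limit, the latter by choosing nonnegative approximants via the lattice.

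I expect the main obstacle to lie in this controlled approximation, since uniform density of $D(q)$ in $C_c(X)$ alone only furnishes approximants $g_n \in D(q)$ whose supports have no uniform compact bound, and the lattice only permits truncation by other elements of $D(q)$, not by constants. My plan is to reduce to $f \ge 0$ via $f = f_+ - f_-$, fix a compact neighborhood $K'$ of $\supp f$, and approximate both $f$ and the scaled Urysohn function $\norm{f}_\infty \chi$ (with $\chi = 1$ on $\supp f$ and $\supp \chi \subseteq K'$) by nonnegative $g_n, h_n \in D(q)$. The wedge $g_n \wedge h_n \in D(q)$ then converges uniformly to $\norm{f}_\infty \chi \wedge f = f$, and a final lattice truncation against a further element of $D(q)$ that dominates $\chi$ on $K'$ and vanishes rapidly off $K'$ confines the supports into a fixed compact neighborhood of $K'$, so that part (a) applies.
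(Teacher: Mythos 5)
Your part (a) has a gap at the point where you invoke ``nonnegativity of $q$'' and, subsequently, Cauchy--Schwarz. The lemma assumes only that $q$ is \emph{positive} ($q(f,g)\geq 0$ for nonnegative $f,g\in D(q)$); it does not assume $q(f)\geq 0$ for all $f$, and this does not follow from positivity together with the lattice hypothesis. A concrete obstruction: $q(f)=f(0)f(1)$ on $D(q)=C_c(\R)$ has bilinear form $q(f,g)=\tfrac12\bigl(f(0)g(1)+f(1)g(0)\bigr)$, hence is positive and has a lattice domain, yet takes negative values. Without nonnegativity Cauchy--Schwarz is unavailable, so your step controlling $q(f,f_n-f)$ fails. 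Your quadratic estimate can be repaired: one has $|q(f)|\leq q(|f|)$, since $q(|f|)-q(f)=4q(f_+,f_-)\geq 0$ and $q(|f|)+q(f)=2\bigl(q(f_+)+q(f_-)\bigr)\geq 0$, and then polarization combined with the rescaling $f\mapsto tf$, $g\mapsto g/t$ upgrades $|q(f)|\leq\norm{f}_\infty^2 q(\varphi)$ to a bilinear bound $|q(f,g)|\leq 2\norm{f}_\infty\norm{g}_\infty q(\varphi)$. The paper instead estimates the bilinear form directly on nonnegative $f,g$: with a nonnegative $\theta_K\in D(q)$ satisfying $\theta_K\geq 1$ on $K$, positivity gives $0\leq q(\norm{f}_\infty\theta_K-f,\norm{g}_\infty\theta_K+g)$, which after a WLOG symmetrization to discard a cross term yields $0\leq q(f,g)\leq\norm{f}_\infty\norm{g}_\infty q(\theta_K)$, and the general case follows by splitting into positive and negative parts.

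Your plan for part (b) is essentially the paper's: truncate a uniform approximant $(\tilde\varphi_n)_+$ of a nonnegative $\varphi\in C_c(X)$ against a fixed nonnegative $\theta\in D(q)$ with $\theta\geq\norm{\varphi}_\infty$ on $\supp\varphi$, so the resulting sequence stays supported in $\supp\theta$ and part (a) applies. The additional wedge against a separate approximant $h_n$ of $\norm{\varphi}_\infty\chi$ in your plan is superfluous once such a $\theta$ is fixed.
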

\begin{proof} We first show that for any compact set $K \subseteq X$ the restriction of $q$ to $D(q) \cap C(K)$ is continuous with respect to the supremum norm.

 Let $f,g\in D(q)\cap C(K)$ be nonnegative. Let $\theta_K\in D(q)$ be such that $ \theta_K\geq 0$ and $\theta_K\geq1$ on $K$. Such a functions exists because $D(q)$ is a dense lattice  in $C_c(X)$. Without loss of generality we  assume 
 $$q(\norm{f}_{\infty}\theta_K,g)-q(\norm{g}_{\infty}\theta_K,f)\leq 0,$$
 for otherwise we could interchange $f$ and $g$. Then, using the positivity of $q$, we get
\begin{align*}
0 &\leq  q(\norm{f}_{\infty}\theta_K-f,\norm{g}_{\infty}\theta_K+g)\\
&=-q(f,g)+\norm{f}_{\infty}\norm{g}_{\infty} q(\theta_K,\theta_K) +q(\norm{f}_\infty \theta_K,g)-q (\norm{g}_{\infty}\theta_K,f)\\
&\leq  -q(f,g)+\norm{f}_{\infty}\norm{g}_{\infty} q(\theta_K,\theta_K).
\end{align*}
This implies
$$0\leq q(f,g)\leq \norm{f}_\infty \norm{g} q(\theta_K,\theta_K).$$
For arbitrary  $f,g\in D(q)\cap C(K)$ we have $f^+,f^-,g^+, g^-\in D(q)\cap C(K)$ because $D(q)$ is a lattice. We obtain 
\begin{align*}
 |q(f,g)| &\leq q(f^+,g^+)+q(f^+,g^-)+q(f^-,g^+)+q(f^-,g^-)\\ 
  &\leq 4\norm{f}_\infty\norm{g}_\infty q(\theta_K,\theta_K).
\end{align*}

Using this continuity in order to prove that $q$ can be uniquely extended to a positive quadratic form on $C_c(X)$ it suffices to show the following:  For every nonnegative $\varphi \in C_c(X)$ there exists a compact $K\subseteq X$ with $\supp \varphi \subseteq K$ such that $\varphi$ can be approximated by nonnegative functions in $D(q) \cap C(K)$. 

To this end, we choose a nonnegative $\theta \in D(q)$ with $\theta \geq \norm{\varphi}_\infty$ on $\supp \varphi$. Such a function exists because $D(q)$ is a dense lattice. Let $K= \supp \theta.$ Since $q$ is densely defined, there exists $(\tilde \varphi_n)$ in $D(q)$ with $\tilde \varphi_n \to \varphi$ uniformly, as $n \to \infty$. Since $D(q)$ is a lattice, the sequence
$$\varphi_n = (\tilde \varphi_n)^+ \wedge \theta $$
belongs to $D(q)$. It is nonnegative and $\supp \varphi_n \subseteq \supp \theta = K$ for all $n \geq 1$. Moreover, using that  $0 \leq \varphi \leq \theta$, we obtain $\varphi_n \to \varphi$ uniformly, as $n \to \infty$.   
\end{proof}
The following theorem provides a characterization of monotone quadratic forms on $C_c(X)$. 
%
%
%
%
%
\begin{theorem}
 Let $q \colon C_c(X) \to [0,\infty)$ be a quadratic form. The following assertions are equivalent: 
\begin{enumerate}[(i)]
 \item $q$ is positive and local.
 \item For all $f,g \in C_c(X)$ the inequality $fg \geq 0$ implies $q(f,g) \geq 0$.
 \item For all $f,f',g,g' \in C_c(X)$ the inequality $fg \geq f'g'$ implies $q(f,g) \geq q(f',g')$.
 \item $q$ is monotone.
 \item There exists a Radon measure $\mu$ on $X$ such that 
 $$q(u) = \int_X f^2 d\mu, \quad f \in C_c(X).$$
\end{enumerate}
In this case, the measure $\mu$ is unique.
\end{theorem}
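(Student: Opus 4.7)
I would prove the cyclic chain of implications $(v) \Rightarrow (iii) \Rightarrow (ii) \Rightarrow (i) \Rightarrow (iv) \Rightarrow (i) \Rightarrow (v)$. The routine steps are quick: $(v) \Rightarrow (iii)$ by integrating $fg - f'g' \geq 0$ against the positive Radon measure; $(iii) \Rightarrow (ii)$ by specializing to $f' = g' = 0$; and $(ii) \Rightarrow (i)$ holds since $f, g \geq 0$ gives $fg \geq 0$ (yielding positivity) while $fg = 0$ gives both $\pm fg \geq 0$ simultaneously (yielding locality). For $(i) \Rightarrow (ii)$, I would decompose $f = f_+ - f_-$ and $g = g_+ - g_-$; the hypothesis $fg \geq 0$ forces $f_+ g_- = f_- g_+ = 0$ pointwise (on $\{f>0\}$ one has $g\geq 0$, so $g_- = 0$, and symmetrically on $\{f<0\}$), so locality annihilates the cross terms while positivity handles the remaining diagonal ones.

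For the equivalence $(i) \iff (iv)$, I would use the polarization identity $q(g-f, g+f) = q(g,g) - q(f,f)$. Given $(i)$ and $|f| \leq |g|$, the product $(g-f)(g+f) = g^2 - f^2 \geq 0$, so $(ii)$ yields $q(g-f, g+f) \geq 0$, giving monotonicity. Conversely, $(iv)$ implies positivity (from $|f-g| \leq f+g$ for $f, g \geq 0$, so $q(f-g) \leq q(f+g)$ and hence $q(f,g) \geq 0$ by polarization) and locality (from $fg = 0 \Rightarrow (f \pm g)^2 = f^2 + g^2$, giving $q(f+g) = q(f-g)$ by $(iv)$, i.e., $q(f,g) = 0$).

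The heart of the argument is $(i) \Rightarrow (v)$, which also delivers uniqueness. I define $L \colon C_c(X) \to \R$ by $L(h) := q(h, \chi)$ for any $\chi \in C_c(X)$ with $\chi = 1$ on $\supp h$; locality makes this independent of $\chi$ (for two such cutoffs, $h(\chi_1 - \chi_2)$ vanishes pointwise, so $q(h, \chi_1 - \chi_2) = 0$). Linearity follows by choosing a common cutoff large enough to work for both summands, and positivity is inherited from positivity of $q$. The Riesz--Markov theorem then delivers a unique Radon measure $\mu$ on $X$ with $L(h) = \int h \, d\mu$.

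It remains to prove $q(f, g) = \int fg \, d\mu$, which by polarization reduces to the quadratic identity $q(f, f) = \int f^2 \, d\mu$ for $f \in C_c(X)$. The main obstacle lies here, since this amounts to the key identity $q(f, f) = q(f^2, \chi)$ which does not follow from bilinearity and locality in a direct fashion. To overcome this, I would invoke a GNS-type construction: by monotonicity, $q(\phi f, \phi f) \leq \|\phi\|_\infty^2 q(f, f)$, so multiplication by $\phi \in C_c(X)$ descends to bounded self-adjoint operators $M_\phi$ on the Hilbert space $\mathcal{H}$ obtained by completing $C_c(X)/\{f : q(f, f) = 0\}$ under $q$. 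This yields a $\ast$-representation of the commutative $C^\ast$-algebra $C_0(X)$; together with the $\sigma$-finite cyclic structure furnished by an exhausting sequence of cutoffs, the spectral theorem identifies $\mathcal{H}$ isometrically with $L^2(X, \mu')$ for some Radon measure $\mu'$. Evaluating $L$ in both pictures forces $\mu' = \mu$, giving the desired formula $q(f, g) = \langle f, g \rangle_{L^2(\mu)} = \int fg \, d\mu$.
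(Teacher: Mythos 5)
Your routine implications are correct, and the polarization argument for (i)$\Leftrightarrow$(iv) via $q(g-f,g+f)=q(g)-q(f)\geq 0$ when $\abs{f}\leq\abs{g}$ (using (ii)) is arguably cleaner than the paper's two-step route through $q(\abs{g})\leq q(\abs{f})$ and $q(f)=q(\abs{f})$. The genuine problem is in (i)$\Rightarrow$(v), which you correctly flag as the crux. You assert that monotonicity makes $M_\phi$ a bounded \emph{self-adjoint} operator on $\mathcal H$, but monotonicity only yields boundedness ($q(\phi f)\leq\norm{\phi}_\infty^2 q(f)$); self-adjointness requires the symmetry $q(\phi f,g)=q(f,\phi g)$ for all $f,g,\phi\in C_c(X)$, and this does \emph{not} follow from positivity, locality and monotonicity by any soft argument. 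It is essentially the statement that $q(f,g)$ depends only on the pointwise product $fg$, which is condition (iii) --- exactly the hard content of the theorem. Moreover, once one has this symmetry, taking $\phi:=g$ and replacing $g$ by a cutoff $\chi\equiv 1$ on the relevant supports gives $q(f,g)=q(fg,\chi)=L(fg)=\int_X fg\,d\mu$ directly, so the whole spectral-theorem apparatus becomes redundant. Your GNS construction thus relocates the difficulty rather than resolving it.

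The paper closes this gap by proving (i),(ii),(iv)$\Rightarrow$(iii) through a partition-of-unity approximation: cover the supports by small sets $G_j$ on which $f$ and $f'$ oscillate by less than $\varepsilon$, replace $f,f'$ by the locally constant approximants $\sum_j f(\xi_j)\chi_j$ and $\sum_j f'(\xi_j)\chi_j$, and bound the resulting error by $\varepsilon I(\abs{g}+\abs{g'})$ using $\abs{q(u,v)}\leq q(\abs{u},\abs{v})$ (a consequence of positivity). The locally constant case then collapses to (ii) by bilinearity, since $q(f(\xi_j)\chi_j,g)-q(f'(\xi_j)\chi_j,g')=q(\chi_j,f(\xi_j)g-f'(\xi_j)g')$ and the second argument is nearly $\chi_j(fg-f'g')\geq 0$. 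This approximation lemma is precisely what your $\ast$-representation presupposes. To repair your argument you would need to establish the symmetry of $M_\phi$ by some such approximation (or, say, verify that $\mathcal H$ is an abstract $L^2$-lattice in the sense of Kakutani, which requires comparable work), and at that point the direct route through (iii) and the Riesz--Markov--Kakutani theorem is shorter and avoids the spectral theorem entirely.
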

 
\begin{proof}
 Clearly, (ii) implies (i), (iii) implies (ii) and (v) implies all other assertions.
 
 (i)$\implies$(iv): Let $f,g\in C_c(X)$ with $|g|\leq |f|$.  The positivity of $q$ yields
\begin{align*}
q(|f|)= q(|g|,|f|)+q(|f|-|g|,|f|)\geq q(|g|,|f|)= q(|g|)+q(|g|,|f|-|g|) \geq q(|g|).
\end{align*}
It is left to show $q(f)=q(|f|)$ for every $f\in C_c(X)$. Since $f^+,f^-\in C_c(X)$ and $f^+ f^-=0$, the locality of $q$ implies $q(f^+,f^-)=0$ and hence 
\begin{equation*}
q(f)=q(f^+)-2q(f^+,f^-)+q(f^-)=q(f^+)+2q(f^+,f^-)+q(f^-)=q(|f|).
\end{equation*}

(iv)$\implies$(ii): Let $f,g \in C_c(X)$ with $fg \geq 0$. Then $|f + g| \geq |f - g|$ so that by monotonicty 
$$q(f) + q(g) - 2q(f,g)  = q(f-g) \leq q(f + g) \leq q(f) + q(g) + 2q(f,g).$$
This shows (ii).

We already proved the equivalence of (i),(ii) and (iv) and that these assertions are implied by (iii). Next we prove that they imply (iii).

Let $f,f',g,g' \in C_c(X)$ with $fg \geq f'g'$. If $f = f'$, the inequality $q(f,g) \geq q(f',g')$ directly follows from (ii). With the help of an approximation we reduce the case $f \neq f'$ to this one. 

We start with the following observation: Locality of $q$ implies that for $\varphi,\chi \in C_c(X)$ the value $q(\varphi,\chi)$ is independent of $\chi$ as long as $\chi = 1$ on ${\rm supp}\, \varphi$. In this case, we write $I(\varphi) := q(\varphi,\chi)$.

Let $\varepsilon > 0$. By compacteness of the supports  we can choose finitely many relatively compact open sets $G_j, j = 1,\ldots,N$, that cover the union of the supports of $f,f',g,g'$,  and choose $\xi_j \in G_j$, such that
$$\sup_{x \in G_j} |f(x) - f(\xi_j)| < \varepsilon \text{ and } \sup_{x \in G_j} |f'(x) - f'(\xi_j)| < \varepsilon.$$
We let $\chi_j \in C_c(X)$, $j = 1,\ldots,N$, be a subordinate partition of unity, i.e. $0 \leq \chi_j \leq 1,$ ${\rm supp} \chi_j \subseteq G_j$ and
$$\sum_{j = 1}^N \chi_j = 1 \text{ on } \bigcup_{j  =1}^N G_j.$$
Such a partition of unity exists because metric spaces are normal. We define 
$$\tilde{f} = \sum_{j = 1}^N f(\xi_j) \chi_j \text{ and } \tilde{f}' = \sum_{j = 1}^N f'(\xi_j) \chi_j.$$
Then, using $\sum_j \chi_j =1$ on the supports of $f,g$, we obtain
$$|q(\tilde{f},g) - q(f,g)|  \leq \sum_{j = 1}^N |q(\chi_j (f - f(\xi_j)),g)| \leq \varepsilon q(\sum_{j = 1}^N \chi_j, |g|) = \varepsilon I(|g|). $$
For the second inequality we used $|q(\varphi,\psi)| \leq q(|\varphi|,|\psi|)$, which directly follows from the positivity of $q$, and the fact that $|\chi_j (f - f(\xi_j))| \leq \varepsilon \chi_j$. Similarly, we have $|q(\tilde{f}',g') - q(f',g')| \leq \varepsilon I(|g'|)$. Moreover, 
\begin{align*}
 q(\tilde f,g) - q(\tilde f',g') &= \sum_{j = 1}^N (q(f(\xi_j) \chi_j,g) - q(f'(\xi_j) \chi_j,g'))\\
 &= \sum_{j = 1}^N q( \chi_j,f(\xi_j)g -f'(\xi_j)g')\\
 &\geq  \sum_{j = 1}^N q( \chi_j,f g -f' g') - \varepsilon q(\sum_{j = 1}^N \chi_j, |g| + |g|')\\
 &\geq - \varepsilon I(|g| + |g'|).
 \end{align*}
For the first inequality we used (ii) and the estimate
$$\chi_j(f(\xi_j)g -f'(\xi_j)g') \geq \chi_j (fg - f'g' -  \varepsilon (|g| + |g'|)).$$
The last inequality follows from $\chi_j(fg  - f'g') \geq 0$ and $\sum_j \chi_j = 1$ on the support of $|g| + |g'|$. Since $\varepsilon > 0$ was arbitrary, these estimates show (iii).

(iii)$\implies$(v): As above we define $I \colon C_c(X) \to \R$ by letting 
$$I(\varphi) = q(\chi,\varphi)$$
for some $\chi \in C_c(X)$ with $\chi = 1$ on the support of $\varphi$. It follows from (iii) that this is well-defined and positive. Moreover, $I$ is linear. By the Riesz-Markov-Kakutani representation theorem there exists a unique Radon measure $\mu$ such that 
$$I(\varphi)  =  \int_X \varphi d\mu$$
for all $\varphi \in C_c(X)$. Let now $f,g \in C_c(X)$ an let $\chi \in C_c(X)$ such that $\chi = 1$ on the supports of $f$ and $g$. Since   $fg = \chi (fg)$, property (iii) yields
$$q(f,g)= q(\chi,fg) = I(fg) = \int_X fg d\mu. $$
Thus, $\mu$ is the desired measure. 
\end{proof}

\begin{remarks}
 The statement of the theorem is not only valid for quadratic forms on continuous functions. The equivalence of (ii) and (iv) was observed in \cite[Appendix~B]{S18} for quadratic forms on sublattices of $L^0(Y,m)$, where $Y$ is an arbitrary set and $m$ is a measure on $Y$. Indeed, the above proof yields the equivalence of (i),(ii) and (iv) in this situation. The equivalence with (iii) requires the existence of suitable partitions of unity in the domain of $q$ and the equivalence with (v) requires that the domain of $q$ is an algebra and   a representation theorem for positive functionals.
\end{remarks}

\begin{corollary}\label{coro:representation theorem}
 Let $q$ be a densely defined positive and local quadratic form on $C_c(X)$ such that $D(q)$ is a lattice. Then there exists a unique Radon measure $\mu$ on $X$ such that 
 $$q(f) = \int_X f^2 d\mu, \quad f \in D(q).$$
\end{corollary}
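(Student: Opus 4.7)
The plan is to extend $q$ to a positive quadratic form $\bar q$ on all of $C_c(X)$ via Lemma~\ref{lemma:cont_ext_induct} and then apply the preceding theorem to $\bar q$. The lemma furnishes $\bar q$ uniquely, and its proof also gives the key technical tool: the restriction of $\bar q$ to $C(K)$ is continuous in the uniform norm for every compact $K\subseteq X$. Once $\bar q$ is known to be local, the implication (i)$\Rightarrow$(v) of the theorem produces a unique Radon measure $\mu$ with $\bar q(f)=\int_X f^2\,d\mu$ for all $f\in C_c(X)$; restricting to $D(q)$ then yields the corollary, and uniqueness of $\mu$ is part of the theorem.

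To verify locality of $\bar q$ I will instead check that $\bar q$ is monotone and invoke (iv)$\Rightarrow$(i) of the theorem. As a preliminary step, $q$ itself is monotone on the lattice $D(q)$, by the same argument used for (i)$\Rightarrow$(iv) in the theorem: since $f^\pm\in D(q)$, locality of $q$ gives $q(f^+,f^-)=0$ and hence $q(f)=q(|f|)$, while for $0\le|g|\le|f|$ in $D(q)$ the expansion
\begin{equation*}
q(|f|)=q(|g|)+2q(|g|,|f|-|g|)+q(|f|-|g|)
\end{equation*}
together with positivity of $q$ yields $q(|g|)\le q(|f|)$. To propagate monotonicity up to $\bar q$, fix $g,f\in C_c(X)$ with $|g|\le|f|$, choose $\theta\in D(q)$ nonnegative with $\theta\ge\|f\|_\infty$ on $\supp f\cup\supp g$ (so that $|f|,|g|\le\theta$), approximate $|f|$ and $|g|$ uniformly by $\tilde f_n,\tilde g_n\in D(q)$, and set
\begin{equation*}
f_n=\tilde f_n^+\wedge\theta,\qquad g_n=(\tilde g_n^+\wedge\theta)\wedge f_n.
\end{equation*}
By the lattice property these functions lie in $D(q)$; they are nonnegative, satisfy $0\le g_n\le f_n$, and are supported in $\supp\theta$. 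Since $\wedge$ is $1$-Lipschitz and $|g|\wedge|f|=|g|$, the uniform convergences $\tilde f_n^+\to|f|$ and $\tilde g_n^+\to|g|$ pass through the truncations to give $f_n\to|f|$ and $g_n\to|g|$ uniformly. Monotonicity of $q$ on $D(q)$ gives $q(g_n)\le q(f_n)$, and continuity of $\bar q$ on $C(\supp\theta)$ passes this to the limit: $\bar q(|g|)\le\bar q(|f|)$. The same approximation combined with $q(h)=q(|h|)$ for $h\in D(q)$ shows $\bar q(h)=\bar q(|h|)$ for every $h\in C_c(X)$, hence $\bar q(g)\le\bar q(f)$, and $\bar q$ is monotone.

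The only step where real care is needed is the double truncation $g_n=(\tilde g_n^+\wedge\theta)\wedge f_n$: the wedge with $f_n$ is required to enforce the inequality $g_n\le f_n$ without destroying uniform convergence to $|g|$, and this is precisely where the hypothesis $|g|\le|f|$ enters, through $|g|\wedge|f|=|g|$ in the limit. Everything after that is formal: monotonicity of $\bar q$ forces it to be positive and local by the theorem, the theorem then yields the measure $\mu$, and uniqueness is built in.
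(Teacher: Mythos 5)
Your proof is correct and follows the same route as the paper: establish monotonicity of $q$ on $D(q)$, extend $q$ to all of $C_c(X)$ via Lemma~\ref{lemma:cont_ext_induct}, show the extension remains monotone using continuity of restrictions to $C(K)$, and invoke the theorem to obtain the measure. The paper compresses the middle step into the single phrase ``by the continuity of restrictions to compact sets this extension is also monotone''; your double-truncation construction $g_n=(\tilde g_n^+\wedge\theta)\wedge f_n$ is exactly the detail needed to make that assertion rigorous (it produces comparable approximants in $D(q)\cap C(\supp\theta)$), so you have filled a real gap in the exposition rather than deviated from it.
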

\begin{proof}
As noted in the previous remark the form $q$ is also monotone. By Lemma~\ref{lemma:cont_ext_induct} it can be uniquely extended to a positive quadratic form on $C_c(X)$ and by the continuity of restrictions to compact sets this extension is also monotone. Hence, the statement follows from the previous theorem.  
\end{proof}

\bibliography{boundarybib}{}
\bibliographystyle{alpha}
\end{document}